\keywords{function convergence, {I}shihara's tricks, constructive analysis}
\theoremstyle{definition}
\begin{document}

\title{The Third Trick}

\author[H.~Diener]{Hannes Diener}	
\address{School of Mathematics and Statistics, University of Canterbury, Christchurch, New Zealand}	
\email{\{\texttt{hannes.diener},\texttt{matthew.hendtlass}\}\texttt{@canterbury.ac.nz}}  

\author[M.~Hendtlass]{Matthew Hendtlass}	
\address{School of Mathematics and Statistics, University of Canterbury, Christchurch, New Zealand}
\email{matthew.hendtlass@canterbury.ac.nz}  





\begin{abstract}
We prove a result, similar to the ones known as Ishihara's First and Second Trick, for sequences of functions. 
\end{abstract}

\maketitle

\section{Introduction}
In Bishop's constructive mathematics (BISH)\footnote{Informal mathematics using intuitionistic logic and an appropriate set-theoretic or type-theoretic foundation such as~\cite{pA01}. See~\cite{eB67,dB85} for details. We do assume dependent/countable choice, but will explicitly label any use thereof.}, one often has to navigate around the reality that one cannot make use of the law of excluded middle. Even though constructivists assume that we can make decision about \emph{finite} objects, case distinctions of the kind of 
\begin{equation} \label{PR:LPO}
\fa{x \in \RR}{x < 0 \lor x=0 \lor x>0}	
\end{equation}
are  unavailable. Most of these general disjunctions are actually false in constructive varieties such as Brouwer's intuitionism (INT) or Russian recursive mathematics (RUSS), while in BISH they are merely not acceptable since the latter is consistent with not just constructive varieties, but also classical mathematics (CLASS). One easily reaches the conclusion that there are no such disjunctions available in BISH. 

This is what makes the results nowadays known as Ishihara's First and Second Trick~\cite{hI91} so deliciously surprising: they allow us to make an interesting and non-trivial decision about ideal objects.
Both Tricks assume a strongly extensional\footnote{Defined below.} mapping \(f\) of a complete metric space \((X,\rho)\) into a metric space \((Y,\rho)\)\footnote{We will use $\rho$ as denoting the metric on any metric space we consider for simplicity. There is no point at which this leads to ambiguity.}, and a sequence \((x_{n})_{n \geqslant 1}\)   in \(X\) converging to a limit \(x\).
\begin{Proposition}[Ishihara's First Trick] For all positive reals \(\alpha < \beta\), 
 \begin{equation*} \label{Eqn:IshFirst}
 	\ex{n \in \NN}{\rho(f(x_{n}),f(x)) > \alpha} \  \lor \  \fa{n \in \NN}{\rho(f(x_{n}),f(x)) < \beta} \ .
 \end{equation*} 
\end{Proposition}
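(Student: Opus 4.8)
The plan is to combine cotransitivity of the order on $\RR$ with the completeness of $X$ and the strong extensionality of $f$ (which I shall use in the form $\rho(f(u),f(v)) > 0 \Rightarrow \rho(u,v) > 0$), by building an auxiliary sequence. First, by countable choice I would fix an increasing modulus of convergence $N$ for $(x_n)$, so that $\rho(x_n,x) < 2^{-k}$ whenever $n \ge N(k)$; and since $\alpha < \beta$, cotransitivity lets me choose (again by countable choice) a binary sequence $(\lambda_n)$ with $\lambda_n = 1 \Rightarrow \rho(f(x_n),f(x)) > \alpha$ and $\lambda_n = 0 \Rightarrow \rho(f(x_n),f(x)) < \beta$. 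The problem then becomes that of deciding, in a sufficiently effective way, whether $(\lambda_n)$ contains a $1$.

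To that end I would set $y_n := x_n$ if $\lambda_k = 0$ for all $k \le n$, and otherwise $y_n := x_m$ where $m = \min\{k \le n : \lambda_k = 1\}$. The point of this definition is that if some $\lambda_m$ equals $1$, then $(y_n)$ is eventually constant with value $x_{m_0}$, $m_0$ being the least index for which $\lambda_{m_0} = 1$, so that $\lim y_n = x_{m_0}$ and therefore $\rho(f(\lim y_n),f(x)) > \alpha$; while if no $\lambda_m$ equals $1$ then $y_n = x_n \to x$. In either case $(y_n)$ should be Cauchy, and the key technical step is to verify this with an explicit modulus: for $n < n'$ one checks, by cases on whether a $1$ has appeared among $\lambda_1,\dots,\lambda_n$ or among $\lambda_1,\dots,\lambda_{n'}$, that $\rho(y_n, y_{n'}) \le \rho(x_a, x_b)$ for suitable $a, b \ge n$, so that $N$ doubles as a modulus for $(y_n)$. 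Completeness of $X$ then supplies a limit $y := \lim y_n$, again with an explicit modulus.

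With $y$ available, cotransitivity applied to $0 < \alpha$ gives $\rho(f(y),f(x)) > 0$ or $\rho(f(y),f(x)) < \alpha$. In the first case, strong extensionality forces $\rho(y,x) > 0$; taking a rational $q$ with $0 < q < \rho(y,x)$ and feeding it into the modulus estimates, I would show that $\lambda_k$ cannot be $0$ for every $k$ below a bound computed from $q$ — for otherwise $y$ would lie within distance less than $q$ of $x$. A bounded search over a decidable predicate then yields an $n$ with $\lambda_n = 1$, hence $\rho(f(x_n),f(x)) > \alpha$: the left disjunct. In the second case, the observation of the previous paragraph shows (contrapositively) that no $\lambda_m$ equals $1$, so $\lambda_n = 0$, and hence $\rho(f(x_n),f(x)) < \beta$, for every $n$: the right disjunct.

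I expect the only real difficulty to be bookkeeping — choosing the thresholds in the Cauchy-modulus estimate for $(y_n)$ and in the final bounded search so that the inequalities match up — and making sure that every case distinction invoked (the value of a $\lambda_k$, whether a $1$ has occurred among finitely many $\lambda_k$, the existence of a $1$ below a fixed bound) is either a decision about finite data or an instance of cotransitivity, so that nothing non-constructive is used. It is worth noting in advance that no continuity of $f$ is assumed anywhere; strong extensionality alone suffices, which is precisely what makes the conclusion surprising.
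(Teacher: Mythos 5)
Your argument is correct and is essentially the standard proof of Ishihara's First Trick (the paper itself only cites \cite{hI91} rather than proving it): encode the cotransitivity decisions in a binary sequence, freeze the auxiliary sequence at the first $1$, use completeness to obtain the limit $y$, and then split on $\rho(f(y),f(x))>0$ versus $\rho(f(y),f(x))<\alpha$, with strong extensionality turning the first case into a bounded search. The only cosmetic deviation is that you set $y_n=x_n$ rather than $y_n=x$ before a $1$ appears, which changes nothing since both choices yield the same Cauchy modulus and the same limit in each case.
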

\begin{Proposition}[Ishihara's Second Trick] For all positive reals \(\alpha < \beta\), either we have 
\begin{itemize}
  \item \(\rho(f(x_{n}),f(x)) < \beta\) eventually,
  \item  or \(\rho(f(x_{n}),f(x)) > \alpha\) infinitely often.
\end{itemize}
\end{Proposition}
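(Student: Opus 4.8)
The plan is to bootstrap Ishihara's First Trick by iterating it. Fix positive reals \(\alpha < \beta\). First apply the First Trick to \((x_n)_{n\geqslant 1}\): if it returns the alternative ``\(\rho(f(x_n),f(x)) < \beta\) for all \(n\)'' we are already in the first case of the conclusion and done; otherwise it hands us a concrete index \(n_1\) with \(\rho(f(x_{n_1}),f(x)) > \alpha\). The First Trick is thus used as the engine for deciding, for a given \(N\), whether \emph{some} \(n > N\) has \(\rho(f(x_n),f(x)) > \alpha\) or \emph{every} \(n > N\) has \(\rho(f(x_n),f(x)) < \beta\) — a decision not available from the approximate splitting of a single real number alone.

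Next, using dependent choice, I would extend \(n_1\) to a nondecreasing sequence of indices \((n_k)_{k\geqslant 1}\) together with a nonincreasing ``still searching'' flag \((b_k)_{k\geqslant 1}\in\{0,1\}^{\NN}\) with \(b_1 = 1\): while \(b_k = 1\), apply the First Trick to the tail \((x_n)_{n > n_k}\); if it returns a fresh witness, let that be \(n_{k+1}\) and keep \(b_{k+1} = 1\); if it returns ``\(\rho(f(x_n),f(x)) < \beta\) for all \(n > n_k\)'', drop the flag (\(b_{k+1} = 0\)), set \(n_{k+1} := n_k\), and stay frozen forever after. By construction every \(n_k\) is one of our witnesses, so \(\rho(f(x_{n_k}),f(x)) > \alpha\) for all \(k\); and \((n_k)\) is strictly increasing while the flag is up and eventually constant if it ever drops, so \((x_{n_k})_k\) is a Cauchy sequence with a modulus computable from a single modulus of convergence for \(x_n\to x\) (fixed once by countable choice). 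By completeness of \(X\) it has a limit \(y\), which is \(x\) if the flag never drops and is \(x_{n_{k_0-1}}\) if it first drops at step \(k_0\); in the latter case \(\rho(f(y),f(x)) = \rho(f(x_{n_{k_0-1}}),f(x)) > \alpha\).

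The final step is one more genuine decision, exactly of the kind that powers the First Trick: split the real number \(\rho(f(y),f(x))\) into the alternatives \(\rho(f(y),f(x)) > 0\) and \(\rho(f(y),f(x)) < \alpha\). If \(\rho(f(y),f(x)) < \alpha\), the flag can never drop (dropping would force \(\rho(f(y),f(x)) > \alpha\)), so \(b_k = 1\) for all \(k\), whence \((n_k)\to\infty\) and \(\rho(f(x_{n_k}),f(x)) > \alpha\) for every \(k\): this is the ``infinitely often'' alternative. If \(\rho(f(y),f(x)) > 0\), strong extensionality of \(f\) gives \(\rho(y,x) > 0\); since \(x_n\to x\) this forces the indices \(n_k\) to be bounded from some stage on, and, \((n_k)\) being nondecreasing, a finite pigeonhole over a window whose length is controlled by that bound locates a stage at which \(n_k = n_{k+1}\). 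This can only happen once the search has stopped, and the stopping stage then supplies ``\(\rho(f(x_n),f(x)) < \beta\) for all \(n > m\)'' for the relevant \(m\): the ``eventually'' alternative.

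I expect the main obstacle to be precisely the reason this is not a bland induction. The obvious move — apply the First Trick to every tail and then ask whether some tail lies entirely below \(\beta\) or every tail pokes above \(\alpha\) — is an instance of the limited principle of omniscience for the definite-but-uncomputable binary sequence recording which alternative occurred at each stage, and so is not available. Folding the iterated applications into a single Cauchy sequence is what makes its limit available through completeness, and strong extensionality is what lets the one remaining splitting of a real number carry the argument. The fiddly part will be the \(\rho(f(y),f(x)) > 0\) branch: turning that strict inequality into an \emph{actual} stopped stage (rather than a mere double negation of one) is exactly what the quantitative pigeonhole, fed by the modulus of convergence of \((x_n)\), is for — and one must be careful to freeze the index at a genuine witness, so that the complementary branch \(\rho(f(y),f(x)) < \alpha\) remains clean.
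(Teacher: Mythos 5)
Your proof is correct and is essentially the standard argument for this result (the paper itself does not prove the Proposition, citing \cite{hI91}): iterate the First Trick on tails via dependent choice to get an increasing witness sequence with a stopping flag, fold the witnesses into a hybrid Cauchy sequence whose limit $y$ exists by completeness, make one cotransitivity split of $\rho(f(y),f(x))$ against $0$ and $\alpha$, and use strong extensionality plus the modulus of convergence to turn $\rho(f(y),f(x))>0$ into an actually located stopping stage. This is also precisely the template the paper reuses for Lemma \ref{Lem:trick2.5}, Theorem \ref{Thm:3rdTrickNinf} and Theorem \ref{Thm:3rdTrickMS}, so no further comparison is needed.
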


Even though it is wrong (see, however, Proposition~\ref{Pro:IshSecTrickForNinf}), it is \emph{helpful} to think of Ishihara's Second Trick as saying that a strongly extensional function is either sequentially continuous or it is discontinuous. 

In this note, we will show that we can prove a similar result for a sequence of functions converging point-wise to another function. 
Vaguely speaking we can show that one can decide whether convergence  happens somewhat uniform, or in a discontinuous fashion.

The motivating example for our results is the following. Consider the sequence of functions \( (f_n)_{n \geqslant 1} \) defined by 
\begin{equation*} 
f_n(x) = \begin{cases} 2n x & \text{if } x \in \lbrack 0,\frac{1}{2n} \rbrack \\ 1-2n x & \text{if } x \in \lbrack\frac{1}{2n}, \frac{2}{2n}\rbrack \\ 0  & \text{if } x \in \lbrack\frac{2}{2n},1\rbrack  \ . \end{cases} 
\end{equation*}

\begin{center}
\begin{tikzpicture}[y=-1cm]
\draw[->] (3,7) -- (3.6,7) node[anchor=north east] {$0$} -- (7.6,7);
\draw[->] (3.6,7.5) -- (3.6,3);
\draw (4.8,6.9)  -- (4.8,7.1) node[anchor=north] {$\frac{1}{2n}$};
\draw (6,6.9) -- (6,7.1) node[anchor=north] {$\frac{1}{n}$};
\draw (7.2,6.9) -- (7.2,7.1) node[anchor=north] {$1$};
\draw (3.5,3.4) node[anchor=east] {$1$} -- (3.7,3.4) ;
\draw[thick] (3.6,7) -- (4.8,3.6) -- (6,7) node[anchor=west, pos=0.5] {$f_n$} -- (7.2,7);
\end{tikzpicture}
\end{center}

The sequence \((f_n)_{n \geqslant 1}\) is one of the standard examples in classical analysis (see, for example, \cite[VII.2 Problem 2]{jD60}) of a sequence of functions on the unit interval that converges point-wise but not uniformly. However, constructively this example breaks, since the assumption that we have point-wise convergence implies the limited principle of omniscience (\LPO), which states that for every binary sequence \( (a_n)_{n \geqslant 1} \) we have \[ \fa{n \in \NN}{a_n =0} \lor \ex{n \in \NN}{a_n = 1} \ , \]
which is, under the assumption of countable choice, equivalent to Equation \ref{PR:LPO}.

\begin{Proposition}
	\LPO is equivalent to the statement that \((f_n)_{n \geqslant 1}\)  defined as above converges point-wise to \(0\).
\end{Proposition}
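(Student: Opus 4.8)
The plan is to prove the two implications separately, using only that each tent function $f_n$ is non-negative, vanishes at the origin and throughout $[\tfrac1n,1]$, and equals $1$ at its apex $\tfrac1{2n}$. For the implication from \LPO to pointwise convergence of $(f_n)_{n\geqslant 1}$ to $0$, I would fix $x\in[0,1]$ and use \LPO to decide whether $x=0$ or $x>0$. Granting countable choice this is immediate, since \LPO is then equivalent to the trichotomy of Equation~\ref{Eqn:LPOR} and $x\geqslant 0$; to stay choice-free one instead applies \LPO directly to the binary sequence whose $k$th term is $1$ exactly when $q_k>2^{-k+2}$, where $(q_k)_{k\geqslant 1}$ is a rational Cauchy representation of $x$ with $\abs{x-q_k}\leqslant 2^{-k}$ — a term equal to $1$ witnesses $x>0$, while the sequence being identically $0$ forces $x\leqslant 0$, hence $x=0$. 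In the case $x=0$ we have $f_n(x)=0$ for every $n$; in the case $x>0$ we choose $N$ with $\tfrac1N<x$, so that $x\geqslant\tfrac1n$ and therefore $f_n(x)=0$ for all $n\geqslant N$. Either way $f_n(x)\to 0$. This half is essentially routine; the only real decision is which of the two routes to take, and I would record both.

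For the converse, let $(a_n)_{n\geqslant 1}$ be a binary sequence. I would first replace it by the sequence $(b_n)_{n\geqslant 1}$ given by $b_n=1$ if $a_n=1$ and $a_k=0$ for all $k<n$, and $b_n=0$ otherwise — a decidable definition for which $(b_n)_{n\geqslant 1}$ contains at most one term equal to $1$, has a $1$ if and only if $(a_n)_{n\geqslant 1}$ does, and is identically $0$ if and only if $(a_n)_{n\geqslant 1}$ is. Then set
\[
x \ = \ \sum_{n\geqslant 1}\frac{b_n}{2n}\ ,
\]
which is a well-defined real number in $[0,\tfrac12]\subseteq[0,1]$: since at most one summand is non-zero, the partial sums form a Cauchy sequence with an explicit modulus. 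By hypothesis $f_n(x)\to 0$, so there is an $N$ with $f_n(x)<\tfrac12$ for all $n\geqslant N$. If $b_n=1$ for some such $n$, then $x=\tfrac1{2n}$ and hence $f_n(x)=1$, contradicting $f_n(x)<\tfrac12$; so $b_n=0$ for every $n\geqslant N$. It then remains only to decide the finite conjunction $b_1=\dots=b_{N-1}=0$: if it holds, then $b_n=0$ for all $n$ and therefore $\fa{n\in\NN}{a_n=0}$; if it fails, then some $b_k=1$ and therefore $\ex{n\in\NN}{a_n=1}$. In both cases the disjunction defining \LPO holds.

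I expect the only place calling for care to be the encoding in this converse direction: the real $x$ must sit exactly on the apex of $f_n$ whenever $a_n$ is the first term of $(a_n)_{n\geqslant 1}$ equal to $1$, so that the resulting single spike of height $1$ is incompatible with convergence to $0$; the passage to $(b_n)_{n\geqslant 1}$ and the separate, decidable treatment of the finite block $b_1,\dots,b_{N-1}$ are exactly what keep the argument constructive. The forward direction involves no genuine obstacle beyond the (well-understood) bookkeeping around countable choice.
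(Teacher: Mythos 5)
Your proposal is correct and follows essentially the same route as the paper: both directions hinge on encoding the binary sequence as a single point $x$ that lands exactly on the apex $\tfrac{1}{2m}$ of $f_m$ when the sequence first hits a $1$, and then exploiting $f_n(x)\to 0$ to reduce \LPO{} to a finite, decidable check. Your presentation of that point as the sum $\sum_{n\geqslant 1} b_n/(2n)$ rather than as the limit of an eventually-constant Cauchy sequence is only a cosmetic variant of the paper's $\circledast$-construction.
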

\begin{proof}
	Let \((a_n)_{n \geqslant 1}\) be a binary sequence. Without loss of generality we may assume that \((a_n)_{n \geqslant 1}\) is increasing. Now consider the sequence \((x_n)_{n \geqslant 1}\) in \( [0,1]\) defined by
 \[ 
   x_{n} = \begin{cases}  
               \frac{1}{2m} & \text{if } a_{n} = 1 \text{ and } a_{m}=1- a_{m+1}  \\ 
               0 & \text{if } a_{n} = 0  \ .
                                                    \end{cases}
 \]
Using the notation of~\cite{hD12b, hD17} this is simply the sequence \( \left((a_n) \circledast (\frac{1}{2n})\right) \). It is easy to see (or formally proven in~\cite[Lemma 2.1]{hD12b}), that \((x_n)_{n \geqslant 1}\) is a Cauchy sequence. It therefore converges to a limit \(z \in [0,1]\). Since \(f_n(z) \to 0\), there exists \( N \in \NN \) such that 
\begin{equation} \label{Eqn:MotivExm}
	\fa{n \geqslant N}{f_n(z) < \frac{1}{2} } \ .
\end{equation}
 Now either \(a_{N+1}=1\) or \(a_{N+1} = 0\). In the first case we are done. In the second case there cannot be \(m > N +1\) such that \(a_m =1\). For assume there is such an \(m\), then we can find \( N < m^\prime < m\) such that \(a_{m^\prime} = 0\) and \(a_{m^\prime+1} = 1\). In that case \(x_n\) is eventually constant on \(\frac{1}{2m^\prime}\), which means that \(z = \frac{1}{2m^\prime}\). However \( f_m(z) = 1\) and \( m \geqslant N\), which would be a contradiction to Equation \ref{Eqn:MotivExm}. Thus \(a_n =0\) for all \(n \in \NN\). Hence \LPO holds.
 
Conversely, it is easy to see that \LPO in the form of Equation \ref{PR:LPO} is enough to show that  \((f_n)_{n \geqslant 1}\) converges point-wise.
\end{proof}

That means that in varieties of BISH in which \LPO is false the above sequence of function actually does not provably converge point-wise; which means the above example cannot serve to show that uniform convergence is not implied by point-wise convergence. As it turns out there are (constructive) scenarios where the two notions coincide.

Looking at this from a different angle one could say that the assumption of point-wise convergence is constructively a stronger assumption than classically. 

Of course, working in BISH we only know that \LPO is not provable, but not whether \nLPO is provable. That is if, in BISH, we encounter a sequence of functions converging point-wise there are, intuitively speaking, two options: the convergence is actually uniform or it is not, in which case \LPO holds. 

This is not just an external disjunction: surprisingly we can make this decision within BISH and without knowing whether \LPO holds or fails.

\section{The Third Trick}
We will first consider a restricted version of our results applying only to functions of a specific type signature. In our opinion, this has the advantage of making the actual underlying ideas and structure of our proofs cleaner and clearer.

As Escardó has shown~\cite{mE13b,mE13}, the natural setting for Ishihara's tricks is  \( \Ninf \)---the space of all increasing binary sequences with the metric induced by the usual one on Cantor space. The space \(\Ninf\) contains the sequences \( \underline{n} = 0^n 1 \dots \) for any \(n \in \NN\), and the sequence \( \omega = 0 00  \dots \).  Using classical logic we have that \[ \Ninf = \set{\underline{n}}{n \in \NN} \cup \menge{\omega}  \ , \] 
however this cannot be proven with intuitionistic logic alone, since that statement is actually equivalent to \LPO.

We also assume that the set \( \Ninf \) is equipped with the reverse  lexicographic order \(\leqslant \); so, for example, \(\underline{n} \leqslant \underline{n+1} < \omega \).\footnote{Of course, this order is not decidable, constructively.}

For functions \(\Ninf \to \NN\) Ishihara's Second Trick becomes

\begin{Proposition}[Ishihara's Second Trick for \(\Ninf \to \NN\)] \label{Pro:IshSecTrickForNinf} $ $
\newline If $f: \Ninf \to \NN$ is strongly extensional, then 
\[\ex{N \in \NN}{ \fa{n \geqslant N}{f(\underline{n}) =  f(\omega)}} \ \lor \ \ex{\alpha \in \BS_{\textrm{inc}}}{\fa{n \in \NN}{f(\underline{\alpha(n)}) \neq  f(\omega)}} \ , \]
where $\BS_{\textrm{inc}}$ is the space of all strictly increasing functions $\BS$.
\end{Proposition}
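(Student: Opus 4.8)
The plan is to read the statement off from the general Ishihara Second Trick stated above. The two relevant observations are that the target space $\NN$ here carries the discrete metric, on which the strict inequalities appearing in that trick's conclusion collapse to $\neq$ and $=$, and that equality of natural numbers is decidable, which will let us repackage ``infinitely often'' as an honest strictly increasing witness. First I would check the hypotheses of the general trick. The set $\Ninf$ of increasing binary sequences is a complete metric space: a Cauchy sequence in $\Ninf$ converges in Cantor space, and since each comparison $x^{(k)}(j) \leqslant x^{(k)}(j+1)$ survives the stabilisation of the $j$-th and $(j+1)$-st coordinates, the limit is again increasing. The sequence $(\underline n)_{n \geqslant 1}$ converges to $\omega$, because $\underline n$ and $\omega$ agree on their first $n$ coordinates. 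And $\NN$ with the discrete metric $\rho$ (so $\rho(a,b) = 1$ for $a \neq b$ and $\rho(a,a) = 0$) is a metric space into which, by hypothesis, $f$ is strongly extensional.

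Applying Ishihara's Second Trick to $f \colon \Ninf \to (\NN,\rho)$, the convergent sequence $(\underline n)_{n \geqslant 1} \to \omega$, and the reals $\frac{1}{3} < \frac{2}{3}$ gives: either $\rho(f(\underline n), f(\omega)) < \frac{2}{3}$ for all sufficiently large $n$, or $\rho(f(\underline n), f(\omega)) > \frac{1}{3}$ for infinitely many $n$. Since $\rho$ takes only the values $0$ and $1$, the first alternative says precisely that $f(\underline n) = f(\omega)$ eventually --- this is the first disjunct --- and the second says precisely that $f(\underline n) \neq f(\omega)$ for infinitely many $n$.

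To finish in the second case I would manufacture a witness $\alpha \in \BS_{\textrm{inc}}$. The predicate $P(n)$, ``$f(\underline n) \neq f(\omega)$'', is decidable because equality on $\NN$ is; hence for every $m$ the set of those $n > m$ with $P(n)$ is a decidable subset of $\NN$, and it is inhabited by the ``infinitely often'' hypothesis, so it has a least element. Taking $\alpha(0)$ to be the least $n$ with $P(n)$ and $\alpha(k+1)$ the least $n > \alpha(k)$ with $P(n)$ defines, by primitive recursion, a total strictly increasing function with $f(\underline{\alpha(n)}) \neq f(\omega)$ for all $n$; this is the second disjunct. Note that the extraction only ever picks least elements of decidable sets, so no countable choice is invoked.

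On this route the argument is essentially a reduction, and the only step that needs care is the last one, where $\alpha$ must be produced as a genuine total strictly increasing function rather than merely as a ``not-not-infinite'' set --- decidability of $P$ is exactly what makes this possible. If one instead wanted a proof not appealing to the general Second Trick, the real work would be Ishihara's original manoeuvre: building an auxiliary Cauchy sequence in $\Ninf$ that records the search for indices $n$ with $f(\underline n) \neq f(\omega)$ and invoking strong extensionality at its limit to force the dichotomy; constructing that sequence and verifying that it is Cauchy would then be the main difficulty.
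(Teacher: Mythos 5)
Your proposal is correct and follows the same route the paper intends: the proposition is presented there without proof as the specialisation of the general Second Trick to $\Ninf$ (complete), the sequence $\underline{n} \to \omega$, and $\NN$ with the discrete metric, which is exactly what you carry out. Your extra care in extracting the strictly increasing witness $\alpha$ from ``infinitely often'' via decidability of equality on $\NN$ (least-witness search, no countable choice) is the one step the paper leaves implicit, and you handle it correctly.
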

Notice that for a function \(f\) of type \(\Ninf \to \NN\) this really states that we can decide whether \(f\) is continuous or find a witness of discontinuity. 

Similarly to Ishihara's Second Trick being an iteration of the first one, our Third Trick will also rely on iterating a simpler result, which is Part 2 of the following.

\begin{Lemma} \label{Lem:trick2.5}
Assume that \( (f_n)_{n \geqslant 1} : \Ninf \to \NN \) is such that \( f_n(x) \to 0 \) for all \(x \in \Ninf\).
\begin{enumerate}
  \item For any \( \gamma \in \Ninf \)
\begin{equation*}
	\ex{ i \in \NN }{f_i (\gamma) \neq 0} \:\: \lor \:\:  \fa{i \in \NN}{f_i (\gamma) = 0} \ .
\end{equation*}
\item Either \begin{itemize}
  \item there exists \(i \in \NN \) and \(\alpha \in \Ninf \) such that  \(f_i(\alpha) \neq 0 \)
  \item or, for all \(i \in \NN \) and \(\alpha \in \Ninf \), we have that  \(f_i(\alpha) = 0 \)
\end{itemize}	
\end{enumerate}
\end{Lemma}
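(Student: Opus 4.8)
The plan is to obtain part~(1) by a bounded search and then feed it back into itself at a single cleverly chosen point of \(\Ninf\) to get part~(2), the last gap being closed by a double-negation elimination that exploits the concrete structure of \(\Ninf\).

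For part~(1), fix \(\gamma \in \Ninf\). As \(f_n(\gamma) \to 0\) in the discrete space \(\NN\), the hypothesis at this single point yields an \(N\) with \(f_n(\gamma) = 0\) for all \(n \geqslant N\); no choice is needed. Since equality on \(\NN\) is decidable, we may then decide whether \(f_i(\gamma) = 0\) for each of the finitely many \(i < N\), and combining that finite verdict with the vanishing of the tail gives the stated disjunction.

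For part~(2), first apply part~(1) at every point \(\underline{m}\); because its two alternatives are mutually exclusive, this produces --- using countable choice --- a binary sequence \((c_m)_{m}\) with \(c_m = 1\) exactly when \(\ex{i}{f_i(\underline{m}) \neq 0}\). Now define \(\gamma \in \Ninf\) \emph{directly}, setting \(\gamma_j = 1\) iff \(\ex{m \leqslant j}{c_m = 1}\); the entries are decidable and the sequence is increasing, so \(\gamma\) is a bona fide point of \(\Ninf\), and one checks that \(\gamma = \underline{m^{\ast}}\) with \(m^{\ast}\) least with \(c_{m^{\ast}} = 1\) if some \(c_m = 1\), while \(\gamma = \omega\) if every \(c_m = 0\). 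Apply part~(1) at this \(\gamma\). In the case \(\ex{i}{f_i(\gamma) \neq 0}\) we are done, with \(\alpha = \gamma\), in the first alternative of part~(2). In the case \(\fa{i}{f_i(\gamma) = 0}\), the possibility \(\ex{m}{c_m = 1}\) is absurd: for the least such \(m^{\ast}\) we would have \(\gamma = \underline{m^{\ast}}\), hence \(\fa{i}{f_i(\underline{m^{\ast}}) = 0}\), contradicting \(c_{m^{\ast}} = 1\). So \(c_m = 0\) for every \(m\); this forces \(\gamma = \omega\), whence \(\fa{i}{f_i(\omega) = 0}\) (since \(f_i(\gamma) = 0\)) and \(\fa{m}{\fa{i}{f_i(\underline{m}) = 0}}\) (since \(c_m = 0\)).

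It then remains to upgrade ``every \(f_i\) vanishes at \(\omega\) and at every \(\underline{m}\)'' to ``every \(f_i\) vanishes on all of \(\Ninf\)'', and this is the step I expect to be the real obstacle: the decomposition \(\fa{\alpha \in \Ninf}{\alpha = \omega \lor \ex{m}{\alpha = \underline{m}}}\) is equivalent to \LPO and so is not at our disposal. The way around it is that we never need to decide which case \(\alpha\) falls into. Fix \(\alpha \in \Ninf\) and \(i\), and suppose for contradiction that \(f_i(\alpha) \neq 0\). If \(\ex{k}{\alpha_k = 1}\) then, by monotonicity, \(\alpha = \underline{m}\) for the least such \(k\), so \(f_i(\alpha) = f_i(\underline{m}) = 0\), a contradiction; hence \(\fa{k}{\alpha_k = 0}\), i.e.\ \(\alpha = \omega\), so \(f_i(\alpha) = f_i(\omega) = 0\), again a contradiction. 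Therefore \(\neg\neg(f_i(\alpha) = 0)\), and since equality on \(\NN\) is stable, \(f_i(\alpha) = 0\). This places us in the second alternative of part~(2). The two points of the argument that do the work are the choice of probe point so that its ``jump'' sits precisely at the index of a witnessing \(\underline{m}\) --- a naïve search over pairs \((i,m)\) would destroy this synchronisation, so that the found point need not be a witness --- together with this final double-negation step, which is exactly what lets countably many tests at the \(\underline{m}\) pin down the behaviour of each \(f_i\) on the whole of \(\Ninf\).
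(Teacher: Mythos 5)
Your proof is correct and follows essentially the same route as the paper's: a bounded search for part (1), then a probe point of \(\Ninf\) whose jump is synchronised with the least witnessing \(\underline{m}\), with the final passage from ``vanishes at \(\omega\) and at every \(\underline{m}\)'' to ``vanishes everywhere'' closed by stability of equality on \(\NN\). The only (harmless, arguably cleaner) differences are that you define the probe \(\gamma\) directly as a decidable increasing binary sequence instead of as the limit of a Cauchy sequence in \(\Ninf\), and that the case of \(\omega\) falls out of your main argument rather than being excluded by a separate preliminary application of part (1).
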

\begin{proof}
\begin{enumerate}
  \item 
For any \( \gamma \in \Ninf \), since \(f_n (\gamma) \to 0 \), there exists \( N_\gamma\) such that \(f_i (\gamma) = 0\) for all \( i \geqslant N_\gamma \). Because we only need to check finitely many terms, either there exists \( i < N_\gamma  \) such that \( f_i(\gamma) \neq 0\) or not.
\item
Using the first part of this lemma we can, first exclude the case that there exists \(i \in \NN\) such that \(f_i(\omega) \neq 0 \), since that means we are done. So, for the rest of this proof we can assume that \( f_i(\omega) = 0 \) for all \( i \in \NN\). 

Also, using the first part of this lemma, we can build (using unique choice) a binary  sequence \( \lambda_k\) such that 
\begin{align*}
	\lambda_k = 0 &\implies  \fa{i}{f_i(\underline{k}) =0} \ , \\
	\lambda_k = 1 &\implies  \ex{i}{f_i(\underline{k}) \neq 0}	\ .
\end{align*}
Now define a sequence \(\beta_k \in \Ninf \) by 
\begin{align*}
	\beta_k = \underline{n} & \iff \fa{i < n}{\lambda_n = 0} \ , \\
	\beta_k = \beta_{k-1} & \iff \ex{i < n}{\lambda_i = 1} \ .
\end{align*} 
In words, going through the sequence \( \lambda \), as long as \(\lambda_k\) is \(0\) we just set \(\beta_k = \underline{k}\), but as soon as we hit a \(k\) with \(\lambda_k = 1\) we stay constant at that first \( \underline{k}\).
This is a Cauchy sequence, which therefore converges to a limit \(\beta \in \Ninf\).
Now, by Lemma \ref{Lem:trick2.5}.1 either there exists \( j \in \NN \) such that \(f_j(\beta) \neq 0\) or not. In the first case we are done, so let us focus, for the rest of the proof, on the second case. 

In this second case we claim that we must have \(\lambda_k = 0\) for all \(k \). For assume there is \(k \) such that \(\lambda_k = 1\). We may assume that \(k\) is the minimal such number. In that case we have \(\beta = \underline{k} \) and there exists \(i\) such that \(f_i(\beta) =f(\underline{k} )\neq 0\). But that is a contradiction to the case we are in, and therefore \( \lambda_k = 0\) for all \( k \in \NN\).
Now there cannot be a \( \alpha \in \Ninf \) and \( i \in \NN \) such that \( f_i(\alpha) \neq 0 \). For assume there is such \(\alpha\) and \(i\). Then \( \alpha = \omega \), since, if there exists \( n \) such that \( \alpha(n) = 1 \), we would have \( \lambda_{n} = 1 \). But that means that \(0 \neq f_i(\alpha) = f_i(\omega) \), which we excluded at the beginning of the proof.  \qedhere
\end{enumerate}
\end{proof}

\begin{Theorem}[The Third Trick for \(\Ninf \to \NN\)] \label{Thm:3rdTrickNinf} 
	If \( (f_n)_{n \geqslant 1} : \Ninf \to \NN \) such that \( f_n(x) \to 0 \) for all \(x \in \Ninf\), then either 
	\begin{itemize}
  \item there exists \(N \in \NN \) such that for all \( i \geqslant N \) and \( \alpha \geqslant \underline{N} \) we have \(f_i(\alpha) = 0 \), that is the convergence is uniform,
  \item or there exists a sequence \( \alpha_n\) in \( \Ninf \) and sequence \( k_n \) such that \(k_n \geqslant n \), \( \alpha_n \geqslant \underline{n}\), and \( f_{k_n}(\alpha_n) \neq 0  \), that is there is a witness showing that the convergence is not uniform.
\end{itemize}
\end{Theorem}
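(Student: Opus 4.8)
The plan is to iterate Lemma~\ref{Lem:trick2.5}.2, in the spirit of Ishihara's Second Trick being an iteration of the First, and then to collapse the countably many resulting case distinctions into a single one by feeding a carefully ``frozen'' sequence back into the pointwise convergence hypothesis.

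First I would localise to cones. For each \(n\) the set \(\{\,\alpha\in\Ninf : \alpha\geqslant\underline{n}\,\}\) equals \(\{\,\alpha\in\Ninf : \alpha(j)=0 \text{ for } j<n\,\}\), and \(\alpha\mapsto 0^{n}\frown\alpha\) is a bijection of \(\Ninf\) onto it sending \(\underline{m}\) to \(\underline{n+m}\) and \(\omega\) to \(\omega\). So the family \(f^{(n)}_{m}(\alpha):=f_{n+m-1}(0^{n}\frown\alpha)\) again satisfies the hypothesis of Lemma~\ref{Lem:trick2.5}, and its Part~2 yields, for each \(n\), either
\[ A_{n}:\ \fa{i\geqslant n}{\fa{\alpha\geqslant\underline{n}}{f_{i}(\alpha)=0}} \quad\text{or}\quad B_{n}:\ \ex{i\geqslant n}{\ex{\alpha\geqslant\underline{n}}{f_{i}(\alpha)\neq0}}\,. \]
These alternatives are mutually exclusive, so \(A_{n}\) is decidable; it is also monotone, \(A_{n}\Rightarrow A_{n+1}\). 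By countable choice, fix a (hence decreasing) binary sequence \((\lambda_{n})\) with \(\lambda_{n}=0\iff A_{n}\), and for each \(n\) with \(\lambda_{n}=1\) a witness \(i_{n}\geqslant n\), \(\beta_{n}\geqslant\underline{n}\) with \(f_{i_{n}}(\beta_{n})\neq0\). If \(\lambda_{n}=0\) for some \(n\) we are in the first case of the theorem (take \(N=n\)); if \(\lambda_{n}=1\) for all \(n\) then \((\beta_{n})\), \((i_{n})\) witness the second. Thus everything comes down to deciding \((\ex{n}{\lambda_{n}=0})\lor(\fa{n}{\lambda_{n}=1})\), which for an arbitrary binary sequence is exactly \LPO; this is where the strength of pointwise convergence \emph{everywhere} has to be spent.

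To make the decision, assume --- after disposing of the case \(\lambda_{1}=0\) --- that \(\lambda_{1}=1\), and build a single sequence \((\gamma_{k})\) in \(\Ninf\) that follows the witnesses and then freezes: set \(\gamma_{k}:=\beta_{p(k)}\), where \(p(k):=k\) if \(\lambda_{1}=\dots=\lambda_{k}=1\) and otherwise \(p(k):=N-1\) for the least \(N\leqslant k\) with \(\lambda_{N}=0\) (note \(\lambda_{N-1}=1\), so \(\beta_{N-1}\) is defined). Since \(\beta_{k}\geqslant\underline{k}\), the ``following'' part converges to \(\omega\) and the ``frozen'' part is eventually constant, so \((\gamma_{k})\) is Cauchy with an explicit modulus and has a limit \(\gamma\in\Ninf\): \(\gamma=\omega\) if \(\lambda\) never drops, and \(\gamma=\beta_{N-1}\) if \(\lambda\) drops at \(N\). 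Now apply the hypothesis at \(\gamma\): there is \(M\) with \(f_{i}(\gamma)=0\) for all \(i\geqslant M\). Decide \(\lambda_{M}\). If \(\lambda_{M}=0\) then \(A_{M}\) holds and we take \(N=M\). If \(\lambda_{M}=1\), suppose \(\lambda\) dropped at some \(N\); since \(\lambda\) is decreasing \(N>M\), whence \(\gamma=\beta_{N-1}\) and \(f_{i_{N-1}}(\gamma)=f_{i_{N-1}}(\beta_{N-1})\neq0\) with \(i_{N-1}\geqslant N-1\geqslant M\), contradicting the choice of \(M\). Hence \(\lambda_{n}=1\) for all \(n\), and \((\beta_{n})\), \((i_{n})\) furnish the second alternative.

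The main obstacle is exactly this last move: the per-level dichotomies \(A_{n}\lor B_{n}\) cannot be combined by pure logic, and the only available leverage is that \(\gamma\) is assembled from genuine non-uniformity witnesses, so that convergence of \((f_{i})\) at this particular \(\gamma\) is incompatible with \(\lambda\) dropping after \(M\). The delicate points are bookkeeping: that \((\gamma_{k})\) is provably Cauchy (here \(\beta_{k}\geqslant\underline{k}\) is used), that the frozen value \(\beta_{N-1}\) is always defined (here \(\lambda_{1}=1\) is used), and that its non-zero index \(i_{N-1}\) still exceeds \(M\); the remaining verifications --- that the cone bijection behaves as claimed and that \(A_{n}\) is indeed monotone --- are routine.
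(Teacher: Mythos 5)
This is correct and essentially the paper's own argument: the same appeal to Lemma \ref{Lem:trick2.5}.2 on each cone above \(\underline{n}\), the same countable-choice collection of decidable flags and non-uniformity witnesses, and the same follow-then-freeze Cauchy sequence whose limit is fed back into the pointwise convergence hypothesis to make the global decision. Your endgame is in fact marginally cleaner --- deciding the single bit \(\lambda_{N_\gamma}\) rather than searching the values \(f_i(\gamma)\) for \(i < N_\gamma\) lets you dispense with the paper's preliminary normalisation \(f_n(\omega)=0\).
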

\begin{proof}
Since \(f_n(\omega) \to 0\) we may assume, without loss of generality, that \( f_n(\omega) = 0\) for all \(n \in \NN\). Using the second part of the previous lemma, fix a binary sequence \( \lambda_n \)  (using  countable choice to collect the \(\alpha_n\)) such that
\begin{align*}
\lambda_n = 0 & \implies \ex{i \geqslant n,\alpha_n \geqslant \underline{n}}{ f_i(\alpha_n) \neq 0} \ , \\
\lambda_n = 1 & \implies  \fa{i \geqslant n, \alpha \geqslant \underline{n}}{f_i(\alpha) = 0} \ .
\end{align*}
The sequence \(\lambda_n\) is increasing, and we may assume that \(\lambda_1 = 0\), since we are otherwise done.

Now define a sequence \((\beta_n)_{n \geqslant 1}\) in \(\Ninf\) by 
\begin{align*}
	\lambda_n = 0 &\implies \beta_n = \alpha_n  \ , \\
	\lambda_n = 1 &\implies \beta_n = \alpha_m, \text{ where } \lambda_m =0 \land \lambda_{m+1} =1 \ .
\end{align*}
In words, as long as \(\lambda_n =0\) we set \(\beta_n = \alpha_n \), and as soon as we find the first term such that \( \lambda_n =1\), we stay constant on the previous term \( \alpha_{n-1} \). This ensures that all \(\beta_n\) are such that there exists \(i\) with \(f_i(\beta_n) \neq 0\). 
Again, \((\beta_n)_{n \geqslant 1}\) is easily seen to be a Cauchy sequence, which therefore converges to a limit \(\gamma\) in \(\Ninf\). 

Since $f_n(\gamma) \to 0$ there is $N_\gamma$ such that $f_n(\gamma) = 0$ for all $n \geqslant N_\gamma$, which means that by checking all $i < N_\gamma$ we can decide whether  either $f_i(\gamma) =0$ for all \(i \in \NN\) or whether there is $k$ such that $f_k(\gamma) \neq 0$ but $f_j(\gamma)=0 $ for $j > k$.

In the first case we must have \(\lambda_n = 0 \) for all \(n \in \NN\): for assume there exists \(m \) such that  \(\lambda_m = 1\). That means we can find \(m^\prime < m\) such that \(\lambda_{m^\prime} =0\) and \(\lambda_{m^\prime+1} =1\). But that implies that \(\beta_n = \alpha_{m^\prime}\) for all \(n > m^\prime \), and therefore \(\gamma = \alpha_{m^\prime} \). Thus there exists \(i > m^\prime \) such that \(f_i(\gamma) \neq 0\); a contradiction to the case we are in. 

In the second case we must have $\lambda_{k^\prime} = 1$, since otherwise we can reach the following, two-step contradiction. If there is $k^\prime > k$ such that $\lambda_{k^\prime} = 0$ and $\lambda_{k^\prime+1} = 1$, then $\gamma = \alpha_{k^\prime}$ and $\ex{i \geqslant {k^\prime}}{f_i(\gamma) \neq 0}$, which contradicts our choice of $k$.
Thus $\lambda_n = 0$ for all $n \in \NN$, which means $\gamma = \omega$, but that is also a contradiction, since we assumed that $f_n(\omega) = 0$ for all $n \in \NN$.

Together we can  decide whether $\fa{n \in \NN}{\lambda_n =0}$ or whether $\ex{n \in \NN}{\lambda_n =1}$  which means we are done, by the definition of $(\lambda_n)_{n \geqslant 1}$.
\end{proof}

\begin{Proposition} \label{Pro:3rdTrickLPO}
In the second case of the previous proposition \LPO holds.
\end{Proposition}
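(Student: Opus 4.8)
The plan is to run the same machinery as in the proof of Theorem~\ref{Thm:3rdTrickNinf}: from an arbitrary binary sequence I build a point of $\Ninf$ whose behaviour under the $f_n$ encodes a decision about that sequence, and I read the decision off a single modulus of convergence.

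So let $(a_n)_{n\geqslant 1}$ be a binary sequence. Without loss of generality we may assume it is increasing, and (inspecting $a_1$) that $a_1 = 0$, since otherwise $\ex{n\in\NN}{a_n = 1}$ and we are done. From the second case fix sequences $(\alpha_n)_{n\geqslant 1}$ in $\Ninf$ and $(k_n)_{n\geqslant 1}$ with $k_n\geqslant n$, $\alpha_n\geqslant\underline{n}$ and $f_{k_n}(\alpha_n)\neq 0$, and define a sequence $(\gamma_n)_{n\geqslant 1}$ in $\Ninf$ --- just as the sequence $(\beta_n)$ is built in the proof of Theorem~\ref{Thm:3rdTrickNinf}, but with the $\lambda_n$ there replaced by $a_n$ --- by
\begin{align*}
 a_n = 0 &\implies \gamma_n = \alpha_n \ , \\
 a_n = 1 &\implies \gamma_n = \alpha_m, \text{ where } a_m = 0 \land a_{m+1} = 1 \ .
\end{align*}
Since $\alpha_n\geqslant\underline{n}$, each $\alpha_n$ agrees with $\omega$ on its first $n$ coordinates, so $(\gamma_n)_{n\geqslant 1}$ is Cauchy (exactly as for the $\beta_n$ in that proof) and converges to a limit $\gamma\in\Ninf$, with $\gamma=\omega$ if $a_n = 0$ for all $n$ (then $\gamma_n = \alpha_n$ and $\alpha_n\to\omega$), and $\gamma=\alpha_m$ whenever $a_m = 0\land a_{m+1}=1$ (then $\gamma_n = \alpha_m$ for all $n\geqslant m$). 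Using the hypothesis $f_n(\gamma)\to 0$, fix $N$ with $f_j(\gamma)=0$ for all $j\geqslant N$.

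The crucial observation is that \emph{any switch in $(a_n)$ must occur before stage $N$}: if $a_m = 0\land a_{m+1}=1$ then $\gamma = \alpha_m$, so $f_{k_m}(\gamma) = f_{k_m}(\alpha_m)\neq 0$, which forces $k_m < N$ and hence $m\leqslant k_m < N$. Granting this, I decide $(a_n)$ by inspecting the finitely many bits $a_1,\dots,a_N$: if $a_N = 1$ then $\ex{n\in\NN}{a_n = 1}$; if $a_N = 0$ then there is no switch below $N$, hence --- by the crucial observation --- no switch at all, and therefore (using $a_1 = 0$ and monotonicity) $\fa{n\in\NN}{a_n = 0}$. This gives $\LPO$.

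The only genuine obstacle is to spot the crucial observation, and to see why it delivers $\LPO$ rather than merely $\WLPO$: a more naive form of the argument would, in the non-uniform branch, yield only $\neg\,\fa{n\in\NN}{a_n=0}$ with no means of locating a witness, whereas $f_{k_m}(\alpha_m)\neq 0$ together with $k_m\geqslant m$ confines the switch below the convergence modulus $N$ of $f_\bullet(\gamma)$, turning the search for a witness into a finite one. Everything else --- Cauchyness of $(\gamma_n)$, completeness of $\Ninf$, and decidability of the finitely many case distinctions --- is already present in the proof of Theorem~\ref{Thm:3rdTrickNinf}; note in particular that, unlike in that proof, we never need to pass to the case $f_n(\omega) = 0$, since we never argue directly from an inequality $f_k(\gamma)\neq 0$.
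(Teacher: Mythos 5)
Your proof is correct and takes essentially the same route as the paper's: encode the binary sequence into a limit point $\gamma$ that equals $\alpha_m$ whenever a switch occurs at $m$, use the convergence modulus $N$ of $f_\bullet(\gamma)$ together with $k_m \geqslant m$ to confine any switch below $N$, and then decide by inspecting $a_N$. The only cosmetic difference is that the paper's auxiliary sequence sits at the base point while $a_n = 0$ rather than walking along the $\alpha_n$, which changes nothing in the argument.
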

\begin{proof}
Let \((a_n)_{n \geqslant 1}\) be a binary sequence. Without loss of generality we may assume that \((a_n)_{n \geqslant 1}\) is increasing. Now consider the sequence \((x_n)_{n \geqslant 1}\) in \( \Ninf \) defined by
 \[ 
   x_{n} = \begin{cases}  
               \alpha_m & \text{if } a_{n} = 1 \text{ and } a_{m}=1- a_{m+1}  \\ 
               0 & \text{if } a_{n} = 0  \ .
                                                    \end{cases}
 \]
 Using the notation of~\cite{hD12b, hD17} this is simply the sequence \((a_n) \circledast (\alpha_n) \). It is easy to see (or follows from Lemma 2.1 of~\cite{hD12b}), that \((x_n)_{n \geqslant 1}\) is a Cauchy sequence. It therefore converges to a limit \(z\). 
 Since \(f_n(z) \to 0\) we can find \(N_z\) such that 
\begin{equation} \label{Eqn:conv3}
\fa{n \geqslant N_z}{f_n(z) = 0} \ .
\end{equation}
Now either \( a_{N_z} = 1\) and we are done, or \( a_{N_z} = 0\). In this second case there cannot be \(n > N_z \) such that \(a_n = 1\): for assume there is such \(n\). Then we can find  \(N_z \leqslant m < n\) such that \(a_m = 1 - a_{m+1} \), which means that \(z = \alpha_m\). By assumption there exists \(j \geqslant m\) such that \(f_j(\alpha_m) \neq 0\), but that is a contradiction to Equation~\ref{Eqn:conv3}. Thus, if \(a_{N_z} = 0\), then \( \fa{n \in \NN}{a_n = 0}\). Altogether \LPO holds.
\end{proof}

\begin{Corollary}
Under the assumption of \( \nLPO \), if  \( (f_n)_{n \geqslant 1} : \Ninf \to \{0,1\} \) is such that \( f_n(x) \to 0 \) point-wise then \(f_n \to 0 \) uniformly.
\end{Corollary}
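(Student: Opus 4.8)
The plan is to feed the sequence \((f_n)_{n \geqslant 1}\) straight into the Third Trick for \(\Ninf \to \NN\), which applies without change since \(\{0,1\} \subseteq \NN\). Theorem~\ref{Thm:3rdTrickNinf} then offers two alternatives. In the second one there are a sequence \((\alpha_n)\) in \(\Ninf\) and a sequence \((k_n)\) with \(k_n \geqslant n\), \(\alpha_n \geqslant \underline{n}\) and \(f_{k_n}(\alpha_n) \neq 0\); but by Proposition~\ref{Pro:3rdTrickLPO} this second alternative entails \LPO, which our hypothesis \nLPO rules out. Hence the first alternative holds: there is \(N \in \NN\) with \(f_i(\alpha) = 0\) for all \(i \geqslant N\) and all \(\alpha \geqslant \underline{N}\).

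It remains to upgrade this to honest uniform convergence. Since each \(f_n\) takes values in \(\{0,1\}\), it is enough to produce a single \(M \in \NN\) with \(f_n(x) = 0\) for all \(n \geqslant M\) and all \(x \in \Ninf\). The only points of \(\Ninf\) not already controlled by the first alternative are \(\underline{0}, \dots, \underline{N-1}\), and there are only finitely many of these; so for each \(k < N\) I would use the point-wise convergence \(f_n(\underline{k}) \to 0\) to choose \(M_k\) with \(f_n(\underline{k}) = 0\) for \(n \geqslant M_k\), and then set \(M = \max\{N, M_0, \dots, M_{N-1}\}\).

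The one step needing care is that, although the order on \(\Ninf\) is undecidable in general, the predicate \(x \geqslant \underline{N}\) is decidable for the fixed \(N\) above: given \(x \in \Ninf\) one need only inspect the finite initial segment \(x(0), \dots, x(N-1)\). If \(x(N-1) = 0\) then \(x \geqslant \underline{N}\) and hence \(f_n(x) = 0\) for every \(n \geqslant N\); if \(x(N-1) = 1\) then \(x = \underline{k}\), where \(k < N\) is the least (and readily found) index with \(x(k) = 1\), and hence \(f_n(x) = f_n(\underline{k}) = 0\) for every \(n \geqslant M_k\). In either case \(f_n(x) = 0\) for all \(n \geqslant M\), which is the asserted uniform convergence. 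I anticipate no real obstacle: all the substance lies in Theorem~\ref{Thm:3rdTrickNinf} and Proposition~\ref{Pro:3rdTrickLPO}, and what remains is the elementary finite case split just sketched.
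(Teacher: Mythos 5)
Your proof is correct and follows exactly the route the paper intends: the corollary is stated without proof because it is the immediate combination of Theorem~\ref{Thm:3rdTrickNinf} with Proposition~\ref{Pro:3rdTrickLPO}, the second alternative being incompatible with \nLPO, so disjunctive syllogism (which is intuitionistically valid) yields the first. Your additional step---handling the finitely many points \(\underline{k}\), \(k<N\), via point-wise convergence and the decidable inspection of the first \(N\) bits of \(x\)---is a legitimate and in fact necessary completion, since the first alternative of the theorem only controls \(\alpha \geqslant \underline{N}\) and is only loosely glossed there as ``uniform''.
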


\section{The General Case}

The situation is a bit more intricate when we move to more general metric spaces. Notice that, for example, in the case of \(\Ninf \to \NN\) being sequentially continuous is equivalent to being point-wise continuous, and even equivalent to being uniformly continuous.

The following definitions mirror these different levels of continuity. 
\begin{Definition}
Let \((f_n)_{n \geqslant 1}\) be a sequence of functions. We say that \((f_n)_{n \geqslant 1}\) converges
\begin{enumerate}
  \item \define{sequentially semi-uniform at $x$}, if for all \(x_n \to x\) and all \(\varepsilon >0\) there exists $N \in \NN$ such that 
\[ \fa{n,i \geqslant N}{ \rho\left(f(x_n),f_i(x_n)\right) < \varepsilon } \ ; \]
  \item \define{semi-uniform at $x$}, if for  all \(\varepsilon >0\) there exists $N \in \NN$ and $\delta>0$ such that for all $y \in B_{x}(\delta)$
\[ \fa{i \geqslant N}{ \rho \left(f(y),f_i(y) \right) < \varepsilon }  \ . \]
\end{enumerate}
It is easy to see that we can also combine $N$ and $\delta$ in the second definition by using $\delta = \nicefrac{1}{N}$.
\end{Definition}

Trivially we have  
\[  \text{ uniform  } \implies \text{ semi-uniform } \implies \text{ seq.\ semi-uniform } \implies \text{ point-wise } \ .  \]

In this section we will show that all three implications can be reversed, assuming certain principles and working on certain spaces. 

\begin{Theorem}[General metric space version of the Third Trick] \label{Thm:3rdTrickMS}
Consider a sequence of functions  \((f_n)_{n \geqslant 1} : X \to Y\) defined on a complete\footnote{We would like to mention that here and in the following one can replace completeness with the much weaker notion of complete enough~\cite{hD12b,hD17}.} metric space $X$ into an arbitrary metric space $Y$ converging point-wise to $f:X \to Y$; let $(x_n)_{n \geqslant 1}$ be a sequence in  $X$ converging to $x \in X$, and consider $\varepsilon >0$.  Either
\begin{itemize}
\item there exists \(N \in \NN \) such that 
\[ \fa{n,i \geqslant N}{ \rho\left(f(x_n),f_i(x_n)\right) < \varepsilon } \ ; \]
  \item or there exists a sequence \( z_n \to x \) and \( k_n \) such that \(k_n \geqslant n \),  and \[ \fa{n \in \NN}{\rho\left(f_{k_n}(z_n),f(z_n)\right) > \frac{\varepsilon}{4} } \ . \]
\end{itemize}
Furthermore, in case the second alternative holds, \LPO holds.
\end{Theorem}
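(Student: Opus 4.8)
The plan is to mimic the structure of the proof of Theorem~\ref{Thm:3rdTrickNinf} as closely as possible, but replacing the combinatorial data of $\Ninf$ with the metric data of $X$. First I would record the analogue of Lemma~\ref{Lem:trick2.5}: given the point-wise convergence $f_n(x_n)\to f(x_n)$ for each fixed $x_n$ (which follows by pointwise convergence at the point $x_n$, together with $f_i \to f$ at $x_n$), and given that the relevant metric comparisons $\rho(f(x_n),f_i(x_n)) < \varepsilon$ versus $> \varepsilon/2$ are decidable for each fixed pair $(n,i)$ with $i \geqslant n$ by the cotransitivity of the reals, one can, via countable choice, manufacture an increasing binary sequence $\lambda_n$ with
\begin{align*}
\lambda_n = 0 &\implies \exists\, i\geqslant n,\ \exists\, z \text{ with } \rho(z,x)<\tfrac1n : \rho\left(f_i(z),f(z)\right) > \tfrac{\varepsilon}{4} \ , \\
\lambda_n = 1 &\implies \forall\, i \geqslant n,\ \forall\, y \text{ with } \rho(y,x)<\tfrac1n : \rho\left(f_i(y),f(y)\right) < \varepsilon \ .
\end{align*}
(The gap between $\varepsilon/4$ and $\varepsilon$ is what gives us the slack in the ball radius and the index, exactly as $\alpha<\beta$ does in Ishihara's tricks; the factor $4$ rather than $2$ is deliberate, to absorb the perturbation coming from moving the base point.) As in the $\Ninf$ case, we may assume $\lambda_1 = 0$, since otherwise the first alternative holds outright at $N=1$.

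Next I would build the witnessing sequence. When $\lambda_n = 0$ choose (by countable choice) a point $z_n$ with $\rho(z_n,x) < 1/n$ and an index $k_n \geqslant n$ with $\rho(f_{k_n}(z_n),f(z_n)) > \varepsilon/4$; when $\lambda_n = 1$ freeze $z_n$ and $k_n$ at the last values produced while $\lambda$ was still $0$. The resulting $(z_n)$ is Cauchy — before the switch because $\rho(z_n,x)<1/n\to 0$, after the switch because it is eventually constant — so by completeness it converges to some $\tilde{z}\in X$, and in fact $\tilde z = x$ in the case where $\lambda$ is identically $0$. Now I would run the key case distinction on $\tilde z$: since $f_n(\tilde z)\to f(\tilde z)$, pick $N_{\tilde z}$ with $\rho(f_i(\tilde z),f(\tilde z)) < \varepsilon/4$ for all $i\geqslant N_{\tilde z}$, and by checking the finitely many indices $i < N_{\tilde z}$ decide whether $\rho(f_i(\tilde z),f(\tilde z)) \leqslant \varepsilon/2$ for all $i$, or whether $\rho(f_k(\tilde z),f(\tilde z)) > \varepsilon/4$ for some $k$. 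In the first case, if some $\lambda_m = 1$ then taking minimal $m' $ with $\lambda_{m'}=0,\lambda_{m'+1}=1$ forces $\tilde z = z_{m'}$ and hence $\rho(f_{k_{m'}}(\tilde z),f(\tilde z)) > \varepsilon/4$, a contradiction; so $\lambda_n = 0$ for all $n$, and then the sequences $(z_n),(k_n)$ already constructed witness the second alternative, and \LPO follows by Proposition~\ref{Pro:3rdTrickLPO} (whose proof transfers verbatim, using $(a_n)\circledast(z_n)$ in place of $(a_n)\circledast(\alpha_n)$). In the second case one argues symmetrically that $\lambda_k$ must be $1$ and traces through, as in Theorem~\ref{Thm:3rdTrickNinf}, that $\lambda_n$ cannot be identically $0$ either (else $\tilde z = x$ and $f_i(x)\to f(x)$ contradicts having $k$ with $\rho(f_k(\tilde z),f(\tilde z))>\varepsilon/4$, since we may assume WLOG $f_i(x) = f(x)$ eventually). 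Either way we have decided $\forall n: \lambda_n = 0$ versus $\exists n: \lambda_n = 1$, and by definition of $\lambda$ the $\exists$ branch gives the first alternative of the theorem at $N$ equal to the witnessing index.

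The main obstacle — and the reason the constant is $\varepsilon/4$ rather than $\varepsilon/2$ — is that, unlike in $\Ninf$, the ``base points'' of the balls shift as $n$ grows: the second alternative of Lemma~\ref{Lem:trick2.5}'s analogue must be stated with a ball $B_x(1/n)$, but the freezing construction then lands us at a limit point $\tilde z$ which is only approximately equal to the frozen $z_{m'}$ unless the sequence is genuinely eventually constant. I would handle this by being careful that, after the switch to $\lambda = 1$, the sequence is \emph{exactly} constant (not merely Cauchy), so that $\tilde z = z_{m'}$ on the nose and the contradiction $\rho(f_{k_{m'}}(\tilde z), f(\tilde z)) > \varepsilon/4$ is literal; the $\varepsilon/4$ slack is then spent entirely on the $i < N_{\tilde z}$ decision step ($\leqslant \varepsilon/2$ versus $> \varepsilon/4$) and on absorbing the gap between $f_i(\tilde z)$ and $f(\tilde z)$ for large $i$. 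A secondary point to get right is that "converging point-wise to $f$" must be used in the specific form $f_i(y) \to f(y)$ for each individual $y$ — in particular at $y = x$, at $y = x_n$, and at $y = \tilde z$ — rather than any uniform statement; once that is kept straight the argument is a faithful metric-space transcription of the $\Ninf$ proof.
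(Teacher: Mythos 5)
Your outer architecture (define $\lambda_n$, freeze to get $z_n\to\tilde z$, decide at $\tilde z$, extract \LPO) matches the paper's, but there is a genuine gap at the key step, and it has two parts. First, your dichotomy for $\lambda_n$ is stated too strongly: the branch $\lambda_n=1$ asserts $\rho(f_i(y),f(y))<\varepsilon$ for \emph{all} $y$ in the ball $B_x(1/n)$. Since the $f_i$ are arbitrary (not assumed continuous) and any decision procedure can only probe countably many points, this universal statement over the ball cannot be certified; indeed, if it could, the argument would yield semi-uniform convergence at $x$ from \nLPO alone, short-circuiting the role that \BDN plays later in the paper in passing from sequential semi-uniformity to semi-uniformity. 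The provable (and sufficient) version of that branch quantifies only over the tail of the \emph{given} sequence: $\rho(f_i(x_m),f(x_m))<\varepsilon$ for all $i\geqslant n$ and all $m\geqslant\mu(n)$, where $\mu$ is a modulus for $x_m\to x$; this is exactly what the theorem's first alternative requires.

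Second, and more importantly, you give no proof that the dichotomy for each fixed $n$ is decidable: ``cotransitivity plus countable choice'' only yields, for each fixed $m$, a bit $\gamma_m$ deciding whether some $i\geqslant n$ has $\rho(f_i(x_m),f(x_m))>\varepsilon/2$ or all $i\geqslant n$ have $<\varepsilon$ (using pointwise convergence at $x_m$ to reduce to finitely many $i$). Passing from these countably many bits to the single decision defining $\lambda_n$ is itself an instance of the trick: one must run a second, \emph{inner} freezing construction (stay at $x$ while $\gamma_m=0$, freeze at the first $x_{m'}$ with $\gamma_{m'}=1$, pass to the limit $w$ by completeness, and decide there between ``$<\varepsilon/2$ for all $i\geqslant n$'' and ``$>\varepsilon/4$ for some $i\geqslant n$''). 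This inner iteration is the analogue of Lemma \ref{Lem:trick2.5}(2) and is precisely the ``only real added effort'' of the metric case; your proposal collapses the two levels of iteration into one. A smaller point: your case analysis at $\tilde z$ is more elaborate than needed (the paper simply inspects the decidable bit $\lambda_{N_y}$ and, when it is $0$, rules out any later $1$), and your sub-argument that ``$\lambda$ cannot be identically $0$'' leans on ``WLOG $f_i(x)=f(x)$ eventually'', which is not available in a general metric space: passing to a tail only gives $\rho(f_i(x),f(x))<\varepsilon/2$, and that does not contradict $>\varepsilon/4$.
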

The proof very much follows the lines of the proof of Lemma \ref{Lem:trick2.5}, Theorem \ref{Thm:3rdTrickNinf}, and Proposition \ref{Pro:3rdTrickLPO} and has been moved to an appendix.

\begin{Corollary}
Assuming \nLPO,	 if $f_n \to f$ point-wise then  $f_n \to f$ sequentially semi-uniform.
\end{Corollary}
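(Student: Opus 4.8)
The plan is to read the conclusion off directly from Theorem \ref{Thm:3rdTrickMS}, using \nLPO to discard that theorem's second alternative. First I would unfold the definition of sequentially semi-uniform convergence: recalling that ``$f_n \to f$ sequentially semi-uniform'' (unqualified) should mean sequentially semi-uniform at every point, we must show that for every $x \in X$, every sequence $(x_n)_{n \geqslant 1}$ in $X$ with $x_n \to x$, and every $\varepsilon > 0$, there exists $N \in \NN$ such that $\rho(f(x_n), f_i(x_n)) < \varepsilon$ for all $n, i \geqslant N$. So I would fix such an $x$, such a sequence $(x_n)_{n \geqslant 1}$, and such an $\varepsilon$.

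Then I would apply Theorem \ref{Thm:3rdTrickMS} to precisely this data. Its first alternative is verbatim the statement we need, and, by the ``furthermore'' clause, its second alternative entails \LPO. Since we are assuming \nLPO, we obtain that the second alternative fails, and hence --- from the disjunction supplied by the theorem --- the first alternative holds. (Formally, this last step is the intuitionistically valid inference from $A \lor B$ and $\neg B$ to $A$, where $\neg B$ comes from \nLPO together with the implication from $B$ to \LPO provided by the theorem.) As $x$, $(x_n)_{n \geqslant 1}$, and $\varepsilon$ were arbitrary, this establishes that $f_n \to f$ is sequentially semi-uniform at every point, as required.

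I do not expect any genuine obstacle here: the whole of the content lives in Theorem \ref{Thm:3rdTrickMS}, and the $\varepsilon$ appearing in its first alternative is exactly the $\varepsilon$ of the definition of sequential semi-uniform convergence, so no rescaling of constants is needed (unlike the $\varepsilon/4$ that surfaces in the second alternative). The only point requiring a little care is that the theorem's second alternative is an existential statement about a witnessing sequence $(z_n)_{n\geqslant 1}$ bundled together with the assertion that \LPO holds; we are therefore rejecting it via its consequence \LPO, rather than by attempting to refute the existence of $(z_n)_{n\geqslant 1}$ directly, which we have no means of doing.
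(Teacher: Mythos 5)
Your proposal is correct and is exactly the intended argument: the paper states this as an immediate corollary of Theorem \ref{Thm:3rdTrickMS}, obtained by applying that theorem to arbitrary $x$, $x_n \to x$, and $\varepsilon > 0$, and using \nLPO together with the ``furthermore'' clause to eliminate the second alternative via disjunctive syllogism (which is constructively valid). No issues.
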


To get from  sequential semi-uniform convergence to semi-uniform convergence we can use Ishihara's boundedness principle \BDN, which states that every countable pseudo-bounded subset of $\NN$ is bounded.
Here, a subset $S$ of $\NN$ is \define{pseudo-bounded} if $\lim_{n \to \infty} s_n / n = 0$ for each sequence $(s_n)_{n \geqslant 1}$ in $S$.In~\cite[Lemma 3]{hI02}, it was shown that a set $S$ of natural numbers is pseudo-bounded if and only if for each sequence $(s_n)_{n \geqslant 1}$ in $S$, $s_n < n$ for all sufficiently large $n$. Every bounded subset of $\NN$ is trivially pseudo-bounded and, conversely, every inhabited, \emph{decidable}, and pseudo-bounded subset of $\NN$ is easily seen to be bounded. However, in the absence of decidability, this is not guaranteed anymore.

Overall \BDN is a very weak principle that is true in CLASS but which also holds in INT and RUSS. Indeed, there are very few known models in which \BDN fails. The first such model was a realizability model described in~\cite{pL04} and the second one was a topological model~\cite{rL12}.

\begin{Proposition} \label{Pro:12}
\BDN implies that for  $(f_n)_{n \geqslant 1}$ and $f$ defined on a countable space $X$ such that $f_n \to f$ sequentially semi-uniform, also converges semi-uniformly. 
\end{Proposition}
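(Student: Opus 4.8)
The plan is to exploit the fact that $X$ is countable to reduce semi-uniformity at a point to sequential semi-uniformity along a single cleverly chosen sequence, and then feed a pseudo-bounded set into \BDN. Fix $x \in X$, enumerate $X = \{y_1, y_2, \dots\}$ (with repetitions allowed if $X$ is not decidably infinite), and fix $\varepsilon > 0$. For semi-uniformity at $x$ it suffices, for each fixed $\varepsilon$, to produce $N$ and $\delta = 1/N$ such that $\rho(f(y), f_i(y)) < \varepsilon$ for all $i \geqslant N$ and all $y \in B_x(\delta)$. So suppose, for contradiction, that no such $N$ works. Then for every $N$ there is a point $y \in B_x(1/N)$ and an index $i \geqslant N$ with $\rho(f(y), f_i(y)) \geqslant \varepsilon$.

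The key step is to package these counterexamples into a pseudo-bounded set. Using countable choice, for each $N$ pick a \emph{least} index $m_N$ of an enumerated point $y_{m_N} \in B_x(1/N)$ for which there exists $i \geqslant N$ with $\rho(f(y_{m_N}), f_i(y_{m_N})) \geqslant \varepsilon$, and pick such an $i = i_N \geqslant N$; set $s_N = i_N$. Let $S = \{s_N \mid N \in \NN\}$, an inhabited countable subset of $\NN$. I would show $S$ is pseudo-bounded: given any sequence $(t_k)$ in $S$, each $t_k$ equals some $i_{N_k}$, and I want $t_k < k$ eventually. The subtle point is that a value of $S$ can arise from many different $N$'s, so to control $t_k / k$ I instead build the sequence $(z_n)$ directly: define $z_n$ to be $y_{m_n}$ on a shrinking-radius schedule, so that $z_n \to x$, and observe that since $(f_n)$ converges sequentially semi-uniformly at $x$, applied to the sequence $(z_n) \to x$, there is $M$ with $\rho(f(z_n), f_i(z_n)) < \varepsilon$ for all $n, i \geqslant M$. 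This directly bounds the indices $i_N$ that can occur for $N \geqslant M$, namely $i_N < M$ whenever — wait, this needs care: the witnessing index $i_N$ satisfies $i_N \geqslant N \geqslant M$, so sequential semi-uniformity forces $\rho(f(z_N), f_{i_N}(z_N)) < \varepsilon$, contradicting $\rho(f(y_{m_N}), f_{i_N}(y_{m_N})) \geqslant \varepsilon$ once $z_N = y_{m_N}$. Thus in fact no $N \geqslant M$ admits a counterexample point in $B_x(1/N)$, which already gives semi-uniformity with this $M$ — so \BDN is needed only to locate the finitely-many "early" radii uniformly.

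Let me restructure: the role of \BDN is to convert "for each $N$, either $N$ works or it doesn't, but we only know this non-constructively" into an actual bound. Concretely, for each $N$ let $g(N)$ be the least $i \geqslant N$ such that there is an enumerated point in $B_x(1/N)$ witnessing failure at level $i$, if one exists among $i \in [N, 2N]$, and $g(N) = N$ otherwise; this is a legitimate function by finite search. One shows $\{g(N) \mid N\}$ is pseudo-bounded using the sequential semi-uniformity along the diagonal sequence $(z_n)$ as above (any sequence in this set that grows at least linearly produces a genuine sequence $z_n \to x$ with persistent $\varepsilon$-failures at indices $\geqslant n$, contradicting sequential semi-uniformity). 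By \BDN the set is bounded, say by $B$; then for $N > B$ the search finds nothing, i.e.\ no enumerated point of $B_x(1/N)$ fails at any level in $[N, 2N]$, and a further application of sequential semi-uniformity (or an iteration of the argument on the tail) upgrades this to failure at no level $\geqslant N$, giving semi-uniformity at $x$ with this $N$.

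\textbf{Main obstacle.} The hard part will be the bookkeeping that turns the per-radius, per-index counterexamples into a \emph{single} sequence $z_n \to x$ whose failure indices grow at least linearly, since that is exactly the hypothesis that pseudo-boundedness needs; one must be careful that the countable choice is applied once to get the points and indices, that the "least enumerated index" clause makes the choice canonical enough to avoid extra choice, and that the diagonalization interleaving the shrinking radii $1/N$ with the index lower bound $i \geqslant N$ is set up so the resulting sequence genuinely converges to $x$ while its witnessing indices are forced to be large. Everything else — the reduction to a fixed $\varepsilon$, the use of $\delta = 1/N$, and the final contradiction with sequential semi-uniformity along $(z_n)$ — is routine.
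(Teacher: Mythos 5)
Your high-level idea --- package counterexample data into a pseudo-bounded object, bound it with \BDN, and get the contradiction from sequential semi-uniformity along a sequence of counterexample points tending to $x$ --- is the right one, but the bookkeeping you yourself flag as ``the hard part'' is exactly where the proposal breaks, in three places. First, the opening move ``suppose, for contradiction, that no such $N$ works'' cannot establish the conclusion: semi-uniformity is a positive $\exists N$ statement, and a reductio only yields its double negation (this framing is also why \BDN momentarily looks unnecessary in your first pass --- under classical reasoning it is trivially true). Second, your $g(N)$ is not ``a legitimate function by finite search'': the clause ``there is an enumerated point in $B_x(1/N)$ witnessing failure at level $i$'' quantifies over the whole infinite enumeration of $X$ and involves the undecidable inequalities $\rho(y,x)<1/N$ and $\rho(f(y),f_i(y))\geqslant\varepsilon$, so restricting $i$ to $[N,2N]$ does not make it decidable; and even granting $g$ and its bound, you would only learn that no failures occur at levels $i\in[N,2N]$, and the promised ``further application \dots\ upgrades this to failure at no level $\geqslant N$'' is essentially the statement to be proved. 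Third, and most importantly, the diagonal sequence $z_n=y_{m_n}$ is not constructively well-defined: selecting a counterexample point at radius $1/n$ presupposes deciding, for each $n$, whether one exists, which is precisely the disjunction that is unavailable.

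The paper resolves all three issues with one device: it applies \BDN not to a function but to the \emph{set} $S$ of levels $n$ for which there exist $i,j\geqslant n$ with $r_i\in B_x(1/n)$ and $\rho(f(r_i),f_j(r_i))>\varepsilon$ (padded with $0$ so it is inhabited, and countable because the defining condition is an existential over approximable data). Pseudo-boundedness only quantifies over sequences $(s_n)$ of elements \emph{already in} $S$, so each $s_n$ comes with witnesses by definition of membership and countable choice extracts them; the decidable integer comparison $s_n\leqslant n$ versus $s_n>n$ is then used to interleave those witness points with the constant point $x$, producing a genuine sequence converging to $x$ to which sequential semi-uniformity applies and forcing $s_n\leqslant n$ eventually. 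The bound on $S$ supplied by \BDN is then directly the $N$ witnessing semi-uniformity at $x$. If you replace your reductio and your $g$ by this set-based formulation, the remainder of your argument goes through.
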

\begin{proof}
	Assume \(f_n \to f\) sequentially semi-uniform. Let \(X = \left\{ r_i \right\} \), and consider \(\varepsilon > 0\), and \(x \in X\). Consider the set
	\[ S = \set{n \in \NN}{\ex{i,j \geqslant n}{r_i \in B_{x}(\frac{1}{n}) \  \land  \ \rho(f(r_i),f_j(r_i)) > \varepsilon}} \cup \menge{0} \  .\] 
	This set is easily seen to be countable.  We will show that \(S\) is pseudo-bounded. To this end let \((s_n)_{n \geqslant 1}\) be a sequence in \(S\). Define a sequence \(x_n\) by 
	\begin{align*}
		s_n \leqslant n  & \implies x_n = x \\
		s_n > n &\implies x_n = r_i, \text{ where } i \text{ is as in the definition of } S  \ . 
	\end{align*}
Since, for all $n$, we have $\rho(x,x_n) < \frac{1}{n}$, this sequence converges to \(x\). Since $(f_n)_{n \geqslant 1}$, by assumption, converges sequentially semi-uniform there exists $N$ such that $\fa{i,n \geqslant N}{\rho(f(x_n),f_i(x_n)) < \varepsilon} $. That means, that for all $n \geqslant N$ we must have $s_n \leqslant n$, since otherwise $x_n$ would be such that there is $j \geqslant N$ with $\rho(f(x_n),f_j(x_n) > \varepsilon$, by the definition of $S$. Thus, by \BDN, the set $S$ is bounded. That means that for  all \(\varepsilon >0\) there exists $N \in \NN$ such that for all $y \in B_{x}(\frac{1}{N}) \cap \set{r_i}{i \in \NN}$
\[ \fa{i \geqslant N}{ \rho \left(f(y),f_i(y) \right) < \varepsilon }  \ , \] 
and since $\varepsilon$ and $x$ were arbitrary we are done.
\end{proof}

\begin{Corollary} \label{Cor:13}
\BDN implies that for every sequence of non-discontinuous functions $(f_n)_{n \geqslant 1}$ and $f$ defined on a separable space $X$ such that $f_n \to f$ sequentially semi-uniform, also converges semi-uniformly. 	
\end{Corollary}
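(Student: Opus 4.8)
The plan is to fix a countable dense subset \(D=\set{r_i}{i\in\NN}\) of \(X\), re-run the pseudo-boundedness argument of the previous Proposition --- which never uses completeness, only that the auxiliary sequence it builds lies in \(X\) and converges to the given point --- with the \(r_i\) playing the role of an enumeration of the space, and then use non-discontinuity of \(f\) and of the \(f_n\) to push the resulting estimate off \(D\) onto all of \(X\). In detail: fix \(x\in X\) and \(\varepsilon>0\); the set
\[ S \;=\; \set{n\in\NN}{\ex{i,j\geqslant n}{r_i\in B_x(\tfrac1n)\ \land\ \rho\bigl(f(r_i),f_j(r_i)\bigr)>\varepsilon}}\cup\menge{0} \]
is countable (as in the previous proof), and the sequence extracted from it lies in \(X\) and converges to \(x\), so our hypothesis of sequential semi-uniform convergence at \(x\) forces \(S\) to be pseudo-bounded; hence, by \BDN, \(S\) is bounded, and we obtain \(N\in\NN\) and \(\delta>0\) with \(\rho(f(y),f_i(y))<\varepsilon\) for every \(y\in B_x(\delta)\cap D\) and every \(i\geqslant N\).

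For the transfer step, fix \(x\in X\) and \(\varepsilon>0\) again and apply the above with \(\nicefrac{\varepsilon}{6}\) in place of \(\varepsilon\), obtaining \(N\) and \(\delta\); the claim is that \(\nicefrac{\delta}{2}\) witnesses semi-uniformity at \(x\) for \(\varepsilon\). Let \(z\in B_x(\nicefrac{\delta}{2})\) and \(i\geqslant N\). Non-discontinuity of \(f\) and of \(f_i\) yields, doubly negatively, an \(\eta>0\), which we may take with \(\eta<\nicefrac{\delta}{2}\), such that \(\rho(f(y),f(z))<\nicefrac{\varepsilon}{6}\) and \(\rho(f_i(y),f_i(z))<\nicefrac{\varepsilon}{6}\) whenever \(\rho(y,z)<\eta\); density then provides an \(r_j\) with \(\rho(r_j,z)<\eta\), so that \(r_j\in B_x(\delta)\) as well. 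The triangle inequality through \(r_j\), together with the estimate from the first step, gives
\[ \rho\bigl(f(z),f_i(z)\bigr)\;\leqslant\;\rho\bigl(f(z),f(r_j)\bigr)+\rho\bigl(f(r_j),f_i(r_j)\bigr)+\rho\bigl(f_i(r_j),f_i(z)\bigr)\;<\;\tfrac{\varepsilon}{2} \ , \]
so \(\neg\neg\bigl(\rho(f(z),f_i(z))<\nicefrac{\varepsilon}{2}\bigr)\), and therefore \(\rho(f(z),f_i(z))\leqslant\nicefrac{\varepsilon}{2}<\varepsilon\). Since \(z\in B_x(\nicefrac{\delta}{2})\) and \(i\geqslant N\) were arbitrary, this is semi-uniformity at \(x\), and \(x\in X\) was arbitrary, so \((f_n)_{n\geqslant 1}\) converges semi-uniformly.

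The main obstacle is the double-negation bookkeeping: non-discontinuity delivers the modulus \(\eta\) only under a double negation, so the transfer step produces \(\neg\neg\bigl(\rho(f(z),f_i(z))<\nicefrac{\varepsilon}{2}\bigr)\) rather than the strict inequality demanded by the definition of semi-uniform convergence. The fix is the standard fact that \(\neg\neg(a<b)\) entails \(a\leqslant b\) for real numbers, combined with having reserved a factor of two up front so that \(\leqslant\nicefrac{\varepsilon}{2}\) upgrades to \(<\varepsilon\); one must also keep \(\eta<\nicefrac{\delta}{2}\) and \(z\in B_x(\nicefrac{\delta}{2})\) so that the \(r_j\) supplied by density genuinely lies in \(B_x(\delta)\). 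The only use of countable choice beyond what is already in the previous proofs is in extracting, for each of \(f\) and \(f_i\), the continuity modulus from non-discontinuity; everything else is the bookkeeping already carried out in the proof of the previous Proposition.
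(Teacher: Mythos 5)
Your overall architecture is the right one --- and, since the paper states this corollary without proof, it is presumably the intended one: fix a countable dense $D \subset X$, re-run the pseudo-boundedness argument of the preceding Proposition with $D$ in place of an enumeration of $X$ (you are right that that argument never needs the countable set to be complete, only that the auxiliary sequence it builds lies in $X$ and converges to the given $x \in X$), and then transfer the resulting bound from $B_x(\delta)\cap D$ to a smaller ball using non-discontinuity. The first half of your argument is correct as it stands, as is the $\neg\neg(a<b)\Rightarrow a\leqslant b$ bookkeeping and the reserved factor of two at the end.

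The transfer step, however, has a genuine gap. Non-discontinuity here is the sequential notion of Ishihara (the one whose universal validity is equivalent to \nWLPO, as cited just below the corollary): if $y_k \to z$ and $\rho(f(y_k),f(z)) \geqslant \varepsilon$ for all $k$, then $\varepsilon \leqslant 0$. This does not yield, even ``doubly negatively'', a modulus $\eta>0$ of pointwise continuity at $z$: passing from ``no such $\eta$ exists'' to an actual sequence witnessing discontinuity requires selecting witnesses through a double negation, which countable choice does not license. The honest route is to argue with sequences directly: assume $\rho(f(z),f_i(z))>\varepsilon$, pick $r_{j_k}\to z$ in $B_x(\delta)\cap D$, and note that for each $k$ the triangle inequality forces $\rho(f(r_{j_k}),f(z))>\tau$ or $\rho(f_i(r_{j_k}),f_i(z))>\tau$. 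But to apply non-discontinuity of $f$ or of $f_i$ you must extract a single subsequence along which one fixed disjunct holds, and deciding which disjunct occurs infinitely often is an omniscience-type decision. Equivalently, what is really needed is non-discontinuity of the single real-valued function $y\mapsto\rho(f(y),f_i(y))$ at $z$, which does not follow constructively from non-discontinuity of $f$ and $f_i$ separately. Under \nWLPO (as in the paper's final corollary) every function, including this composite, is non-discontinuous and the transfer is a one-liner; with only the stated hypothesis you need either to strengthen ``non-discontinuous'' (e.g.\ to sequential continuity) or to assume non-discontinuity of the composites $\rho(f(\cdot),f_i(\cdot))$. The problem sits one step earlier than the double-negation elimination you flag as the main obstacle.
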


For the next corollary we remind the reader that the existence of a discontinuous function is equivalent to the weak limited principle of omniscience \WLPO, and hence the negation of the latter implies that all functions defined on a \emph{complete} metric space are non-discontinuous~\cite[Theorem 1]{Ishihara1992}. 

\begin{Corollary} \label{Cor:14}
Assuming \nWLPO and \BDN (both hold in RUSS and INT). For a sequence of functions $f_n, f:X \to Y$ defined on a complete, separable $X$ and into an arbitrary metric space $Y$, point-wise and  semi-uniform convergence are equivalent.
\end{Corollary}

Finally, in order to also make the step to uniform convergence, we need to assume a form of Brouwer's fan theorem:  \FANP.
All versions of Brouwer's fan theorem  them enable one to conclude that a  bar is uniform. Here, a bar $B$ is a subset of the space of all finite binary sequences $2^\ast$ such that for all infinite binary sequences $\alpha$ there is $n$ such that $\overline{\alpha}n$---the initial segment of $\alpha$ of length $n$---is in $B$. A bar is uniform if this happens uniformly for all $\alpha \in \CS$, that is if
\[ \ex{N}{\fa{\alpha \in \CS}{\ex{n \leqslant N}{ \overline{\alpha}n \in B }}} \ .\]

A bar $B$ is called ${\Pi}_{1}^0$-bar, if there exist a set $S \subset \cS \times \NN$ such that 
\[ u \in B \iff \fa{n \in \NN}{ (u,n) \in S } \ , \]
and \[ (u,n) \in S \implies ( u \ast 0,n) \in S \land ( u \ast 1,n) \in S \ . \]
\FANP holds in INT as well as in CLASS and is slightly stronger than the uniform continuity theorem (\UCT), which is the statement that all point-wise continuous functions $\CS \to \RR$ are  uniformly continuous.

\begin{Proposition} \label{Pro:FANc_impl_semi_u_is_u}
\FANP implies that every sequence of non-discontinuous functions $f_n,f:  \CS \to \RR$ such that $f_n \to f$ semi-uniformly, also converges uniformly. 
	
	Conversely, the latter statement implies \UCT.
\end{Proposition}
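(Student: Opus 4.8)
The plan is to prove the two implications separately: for the forward direction I would squeeze a $\Pi^{0}_{1}$-bar out of semi-uniform convergence and feed it to $\FANP$, and for the converse I would use a truncation construction. So fix non-discontinuous $f_{n},f\colon\CS\to\RR$ with $f_{n}\to f$ semi-uniformly and fix $\varepsilon>0$; it suffices to produce a single $N$ with $\rho(f_{i}(\alpha),f(\alpha))\leqslant\varepsilon$ for all $i\geqslant N$ and all $\alpha\in\CS$ (running the argument with $\varepsilon/2$ then gives the strict inequality of the definition of uniform convergence). First I would record the consequence of non-discontinuity we need: since each of $f_{i}$ and $f$ is non-discontinuous (and strongly extensional, as always in this setting), the reasoning behind Ishihara's Second Trick shows they are sequentially continuous --- the alternative ``$\rho(g(x_{k}),g(x))>\alpha$ infinitely often'' for some $x_{k}\to x$ would, after passing to a subsequence, exhibit a discontinuity. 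Consequently, for a finite binary word $u$, writing $[u]$ for the set of its infinite extensions, the following transfer holds: if $\rho(f_{i}(v\ast0^{\infty}),f(v\ast0^{\infty}))\leqslant\varepsilon$ for every finite $v\supseteq u$, then $\rho(f_{i}(\beta),f(\beta))\leqslant\varepsilon$ for every $\beta\in[u]$, because the points $\overline{\beta}k\ast0^{\infty}$ ($k\geqslant|u|$) are among the sampled points and converge to $\beta$, so sequential continuity of $f_{i}$ and $f$ lets us pass to the limit in $\rho(f_{i}(\overline{\beta}k\ast0^{\infty}),f(\overline{\beta}k\ast0^{\infty}))\leqslant\varepsilon$.

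Now I would set
\[ B=\set{u\in\cS}{\fa{i\geqslant|u|}{\ \fa{\text{finite }v\supseteq u}{\ \rho\bigl(f_{i}(v\ast0^{\infty}),f(v\ast0^{\infty})\bigr)\leqslant\varepsilon}}}\ . \]
This is a bar: for $\alpha\in\CS$, semi-uniform convergence at $\alpha$ supplies $N_{\alpha}$ and $m_{\alpha}$ with $\rho(f_{i}(y),f(y))<\varepsilon$ for all $i\geqslant N_{\alpha}$ and all $y$ in the cylinder of depth $m_{\alpha}$ about $\alpha$, whence $\overline{\alpha}\bigl(\max(N_{\alpha},m_{\alpha})\bigr)\in B$. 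It is monotone, since enlarging $u$ only shrinks the ranges of the two universal quantifiers. And it is a $\Pi^{0}_{1}$-bar: coding triples $(i,v,m)$ with $v\supseteq u$ and $i\geqslant|u|$ by a single $n$ and unravelling $\leqslant\varepsilon$ into $\rho(\cdot,\cdot)<\varepsilon+2^{-m}$ --- which, after passing to rational approximants of the two reals in the routine way, is a decidable condition on $(u,n)$ --- one obtains a set $S$ with $u\in B\iff\fa{n}{(u,n)\in S}$ that is monotone in the tree coordinate. Applying $\FANP$ yields $N$ such that every $\alpha$ has an initial segment of length $\leqslant N$ in $B$; by monotonicity every word of length $N$ lies in $B$ (take $\alpha=u\ast0^{\infty}$). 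For arbitrary $\alpha\in\CS$, the word $u=\overline{\alpha}N\in B$ then gives $\rho(f_{i}(v\ast0^{\infty}),f(v\ast0^{\infty}))\leqslant\varepsilon$ for all $i\geqslant N$ and all finite $v\supseteq u$, and the transfer step upgrades this to $\rho(f_{i}(\alpha),f(\alpha))\leqslant\varepsilon$ for all $i\geqslant N$. This is the desired uniform bound.

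For the converse, assume the displayed statement and let $g\colon\CS\to\RR$ be point-wise continuous; I would put $f_{n}(\alpha)=g(\overline{\alpha}n\ast0^{\infty})$ and $f=g$. Each $f_{n}$ depends only on the first $n$ coordinates of its argument, hence is uniformly continuous and in particular non-discontinuous, and $g$ is non-discontinuous. Point-wise convergence $f_{n}(\alpha)\to g(\alpha)$ is just continuity of $g$ at $\alpha$ along $\overline{\alpha}n\ast0^{\infty}\to\alpha$, and semi-uniform convergence at a point $x$ follows by choosing $\delta$ from continuity of $g$ at $x$ (for $\varepsilon/3$) and $N$ with $2^{-N}<\delta$: if $\rho(x,y)<2^{-N}$ and $i\geqslant N$ then $\overline{y}i\ast0^{\infty}$ agrees with $x$ on the first $N$ coordinates, so $\rho(f_{i}(y),f(y))=\rho(g(\overline{y}i\ast0^{\infty}),g(y))<\tfrac{2\varepsilon}{3}$. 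Hence, by hypothesis, $f_{n}\to f$ uniformly; taking $i=N$ gives $\rho(g(\overline{\alpha}N\ast0^{\infty}),g(\alpha))<\varepsilon$ for all $\alpha$, so $\rho(g(\alpha),g(\beta))<2\varepsilon$ whenever $\overline{\alpha}N=\overline{\beta}N$. As $\varepsilon$ was arbitrary, $g$ is uniformly continuous, which is \UCT.

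I expect the main obstacle to be getting the bar $B$ exactly right: it must simultaneously be a bar (which needs the full-neighbourhood form of semi-uniform convergence, not merely the sequential one), be literally of the $\Pi^{0}_{1}$-bar shape demanded by the definition (a tree-monotone countable intersection of decidable stages, which forces one to work with rational approximants rather than bare real inequalities), and have the property that membership controls $f_{i}-f$ on the \emph{whole} cylinder --- and it is precisely this last point where non-discontinuity, via sequential continuity and the $\neg\neg$-stability of non-strict inequalities between reals, carries the argument. Unlike the $\Ninf$ case, no Cauchy sequence or auxiliary increasing binary sequence is built by hand; all the compactness is delegated to $\FANP$.
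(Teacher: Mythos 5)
Your forward direction follows the same route as the paper's: sample $\rho(f_i(\cdot),f(\cdot))$ at the eventually‑zero points $v\ast 000\dots$, package the semi‑uniform convergence as a $\Pi^0_1$-bar, apply \FANP, and use non-discontinuity to transfer the resulting uniform bound from this dense set to all of $\CS$. The one step you assert rather than verify is exactly the one the paper has to engineer explicitly: stage-wise monotonicity of the decidable set $S$. With the naive coding of $n$ as a triple $(i,w,m)$, the stage-$n$ condition for $u$ speaks about the point $u\ast w$ while the stage-$n$ condition for $u\ast j$ speaks about the different point $u\ast j\ast w$, so $(u,n)\in S$ does not imply $(u\ast j,n)\in S$; the cure is to let stage $n$ assert only the finitely many instances with index and precision bounded by $n$ and $|w|\leqslant n-|u|$, so that the assertion weakens as $|u|$ grows and becomes vacuous for long $u$ --- this is precisely the role of the condition ``$|w|\leqslant n-|u|$'' in the paper's definition of $S$. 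You correctly identify this as the main obstacle, and it is fixable exactly as the paper does it. A second small point: you obtain the transfer step from sequential continuity, which you get from non-discontinuity \emph{plus strong extensionality}; strong extensionality is not a hypothesis of the proposition, though the paper's own one-line ``by the non-discontinuity'' glosses over the same issue, so this is a shared imprecision rather than a defect of your argument. Your converse, by contrast, is genuinely different and self-contained: the paper truncates the range, taking $f_n=\min\{f,n\}$, and reduces to the cited equivalence of \UCT with boundedness of pointwise continuous functions on $\CS$; you discretize the domain, taking $f_n(\alpha)=g(\overline{\alpha}n\ast 000\dots)$, check semi-uniform convergence directly from pointwise continuity of $g$, and read off a modulus of uniform continuity for $g$ from the uniform convergence. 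Your version avoids the external reference and delivers \UCT in its literal form; the paper's is shorter given the citation. Both are correct.
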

\begin{proof}
	Let $f_n : \CS \to \RR$ be such that $f_n \to f$ semi-uniform. 
	Fix $\lambda_{u,n}$ such that 
	\begin{align*}
		\lambda_{u,n} = 0 & \implies \rho\left(f_n(u \ast 000\dots ),f(u \ast 000\dots)\right) < \varepsilon \ , \\
		\lambda_{u,n} = 1 & \implies \rho\left(f_n(u \ast 000\dots ),f(u \ast 000\dots)\right) > \nicefrac{\varepsilon}{2} \ .
	\end{align*}
Define a decidable set $S \subset \cS \times \NN$ by
\begin{equation} \label{Eqn:Pi01FAN}
	(u,n) \in S \iff  \fa{\abs*{u}\leqslant i \leqslant \abs*{u}+n }{\fa{w \in \cS}{ \abs*{w} \leqslant n - \abs*{u}  \implies \lambda_{u\ast w, i} = 0}}  \ .
\end{equation}
To obtain a $\Pi_1^0$-set define $B$  by $u \in B  \iff \fa{n \in \NN}{(u,n) \in S}$. 
We claim that $B$ is a $\Pi_1^0$-bar. Firstly, notice that the condition ``$\abs*{w} \leqslant n - \abs*{u} $'' easily ensures that if $(u,n) \in S$ then also $(u \ast i,n) \in S$ for $i=0,1$, as required. To see that $B$ is a bar let $\alpha \in \CS$ be arbitrary. By the assumption of semi-uniformity there exists $N$ such that for all $\beta \in \CS$ and all $n \geqslant N$ we have
\[ \rho \left( f_n(\overline{\alpha}N \ast \beta ), f(\overline{\alpha}N \ast \beta ) \right) < \nicefrac{\varepsilon}{2}  \ . \]
In particular we have that 
\begin{equation} \label{Eqn:CSclosure}
\fa{w \in \cS}{\fa{i \geqslant N}{ \lambda_{\overline{\alpha}N \ast w , i } = 0  \ . }}
\end{equation}
That ensures that $\overline{\alpha}N \in B$, since it actually over-fullfills  \ref{Eqn:Pi01FAN}.

Applying \FANP we get a uniform bound for  $B$; that is, there is $M$ such that $\overline{\alpha}M \in B$ for all $\alpha \in \CS$. Now let $v \in \cS$ and $n \in \NN$ such that both $\abs*{v},n \geqslant M$. Let $w$ be the suffix of  $v$: $v = \overline{v}M \ast w$. 

Let $k = \max \{  n-M, |w|+M \}$. Then   $n \leqslant M+k $, and $\abs*{w} \leqslant k -M $. That means that, since $(\overline{v}M, k ) \in S $, we have $\lambda_{v,n} = 0$, and hence, by the definition of $\lambda$ we have \[ \rho\left(f_n(v \ast 000\dots ),f(v \ast 000\dots)\right) < \varepsilon \] for $\abs*{v} \geqslant M$ and $n \geqslant M$.
Since we can pad out any $v \in \cS$ if it is shorter than $M$ by $0$s that actually means that for \emph{any} $v \in \cS$ and all $n \geqslant M$ we have  $\rho\left(f_n(v \ast 000\dots ),f(v \ast 000\dots)\right) \leqslant \varepsilon$.
Finally, by the non-discontinuity this means that for any $\alpha \in \CS$ and $n \geqslant M$ we have  $\rho\left(f_n(\alpha),f(\alpha)\right) < \varepsilon$. Hence the convergence is uniform.

Conversely to see that \UCT holds, it suffices to show that every point-wise continuous $f:\CS \to \RR$ is bounded~\cite[Theorem 10]{dB07}. We may assume that $f \geqslant 0$. Let $f_n = \min\{f,n\}$. Then $f_n \to f$ semi-uniformly, since a point-wise continuous function is locally bounded. Now if $f_n \to f$ uniformly then there exists $M \in \NN$ such that $\abs*{f_M(\alpha)-f(\alpha)} < 1$ for all $\alpha \in CS$. That means that there cannot be $\alpha$ such that $f(\alpha) > M+1$, since in that case $f(\alpha) > M+1 = f_M(\alpha) +1$, which means that $\abs*{f_M(\alpha)-f(\alpha)} > 1$. So $f(\alpha) \leqslant M+1$ for all $\alpha \in \CS$ and we are done.
\end{proof}

\begin{Corollary}
Assuming \FANP. For a sequence of non-discontinuous functions $f_n, f:X \to Y$ defined on a compact\footnote{As common in constructive analysis we define compact as complete and totally bounded.} $X$ and into an arbitrary metric space $Y$, semi-uniform and  uniform convergence are equivalent.
\end{Corollary}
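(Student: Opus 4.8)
The plan is to reduce the statement to the Cantor–space case already handled in Proposition~\ref{Pro:FANc_impl_semi_u_is_u}. The implication ``uniform $\implies$ semi-uniform'' is immediate from the chain of implications noted just after the definition of semi-uniform convergence, so only the converse needs an argument. The one external input I would use is the standard fact of constructive analysis that every (inhabited) compact metric space $X$ is a uniformly continuous image of Cantor space, onto: there is a uniformly continuous $\phi\colon \CS \to X$ such that every $x \in X$ equals $\phi(\beta)$ for some $\beta \in \CS$ (see e.g.\ \cite{dB85}); the empty case is trivial.

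So suppose $f_n, f\colon X \to Y$ are non-discontinuous and $f_n \to f$ semi-uniformly. I would set $g_n = f_n \circ \phi$ and $g = f \circ \phi$, both maps $\CS \to Y$, and then carry out two routine pullback checks. First, $g_n \to g$ semi-uniformly on $\CS$: given $\alpha \in \CS$ and $\varepsilon > 0$, semi-uniformity at $\phi(\alpha)$ supplies $N$ and $\delta > 0$ with $\rho(f(y), f_i(y)) < \varepsilon$ for all $i \geqslant N$ and all $y \in B_{\phi(\alpha)}(\delta)$; uniform continuity of $\phi$ then yields $\delta' > 0$ with $\phi\bigl[B_{\alpha}(\delta')\bigr] \subseteq B_{\phi(\alpha)}(\delta)$, and hence $\rho(g(\beta), g_i(\beta)) < \varepsilon$ for all $i \geqslant N$ and all $\beta \in B_{\alpha}(\delta')$. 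Second, $g_n$ and $g$ are non-discontinuous: a uniformly continuous map preserves sequential convergence, so any $\beta_k \to \beta$ in $\CS$ is sent to $\phi(\beta_k) \to \phi(\beta)$ in $X$, and thus any witness that $g_n$ (resp.\ $g$) is discontinuous at $\beta$ is a witness that $f_n$ (resp.\ $f$) is discontinuous at $\phi(\beta)$, contradicting the hypothesis.

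Now I would invoke Proposition~\ref{Pro:FANc_impl_semi_u_is_u}. Although it is stated for maps into $\RR$, its proof of ``semi-uniform $\implies$ uniform'' uses the codomain only through the real numbers $\rho\bigl(f_n(u \ast 000\dots), f(u \ast 000\dots)\bigr)$ and the cotransitivity of $<$ on $\RR$ (to construct the binary sequence $\lambda_{u,n}$), so it applies verbatim with an arbitrary metric space $Y$. Hence, under \FANP, $g_n \to g$ uniformly on $\CS$: for each $\varepsilon > 0$ there is $M$ with $\rho(g_n(\beta), g(\beta)) < \varepsilon$ for all $n \geqslant M$ and all $\beta \in \CS$. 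Since $\phi$ is onto, every $x \in X$ equals $\phi(\beta)$ for some $\beta$, so $\rho(f_n(x), f(x)) = \rho(g_n(\beta), g(\beta)) < \varepsilon$ for all $n \geqslant M$; that is, $f_n \to f$ uniformly on $X$, as required. Note that no principle beyond \FANP enters, the Cantor surjection being available in BISH.

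The two pullback verifications are genuinely straightforward, so the only point I expect to need care is the observation that the proof of Proposition~\ref{Pro:FANc_impl_semi_u_is_u} already goes through for an arbitrary codomain $Y$; that is the step I would spell out rather than merely assert. If one prefers not to rely on $\phi$ being literally surjective, one can instead use only that $\phi[\CS]$ is dense in $X$ and, at the final step, repeat the density-plus-non-discontinuity argument from the end of the proof of Proposition~\ref{Pro:FANc_impl_semi_u_is_u}; but invoking the (standard) surjection keeps the argument shortest.
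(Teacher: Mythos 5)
Your proof is correct and follows essentially the same route as the paper: reduce to Cantor space via a uniformly continuous surjection $F\colon \CS \to X$ (the paper cites \cite[Proposition 7.4.3]{Troelstra1988}) and then invoke Proposition~\ref{Pro:FANc_impl_semi_u_is_u}. The one genuine difference is how the arbitrary codomain $Y$ is handled. You pull back $f_n$ and $f$ themselves, obtaining $Y$-valued maps on $\CS$, and must then argue that Proposition~\ref{Pro:FANc_impl_semi_u_is_u} generalises from $\RR$-valued to $Y$-valued functions --- a claim that is true (the proof only ever manipulates the real numbers $\rho(f_n(\cdot),f(\cdot))$) but which, as you note yourself, has to be spelled out. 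The paper sidesteps this entirely by a small trick: it defines the single real-valued sequence $g_n(\alpha) = \rho\bigl(f_n(F(\alpha)), f(F(\alpha))\bigr)$, which converges semi-uniformly to the zero function, so the proposition applies literally as stated; surjectivity of $F$ then transfers the uniform bound back to $X$ exactly as in your final step. Both arguments require the same routine pullback verifications (semi-uniformity and non-discontinuity are preserved under composition with the uniformly continuous $F$), so the paper's version buys only economy --- no re-examination of an earlier proof --- while yours is marginally more general in spirit.
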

\begin{proof}
By~\cite[Proposition 7.4.3]{Troelstra1988} there exists a uniformly continuous and surjective function $F:\CS \to X$.
Now let $(g_n)_{n \geqslant 1}: \CS  \to \RR$ be defined by \[ g_n(\alpha) =  \rho(f_n(F(\alpha)), f(F(\alpha)) ) \ . \] It is easy to see that $g_n$ is non-discontinuous and that $g_n \to 0$ semi-uniform. So by Proposition \ref{Pro:FANc_impl_semi_u_is_u} $g_n \to 0$ uniformly, which means for an arbitrary $\varepsilon > 0$ there exists $N \in \NN$ such that $\abs*{g_n} < \varepsilon$ for all $n \geqslant N$. Since $F$ is surjective,  for all $x \in X$ and $n \geqslant N$ we have $g_n(\alpha) =  \rho(f_n(x), f(x) )$. Hence $f_n \to f$ uniformly. 
\end{proof}

%

Since every compact space is, by definition, totally bounded, which in turns implies separability, we get the following, final corollary.

\begin{Corollary}
	Assuming \nWLPO, \BDN, and \FANP (all these hold in INT). For a sequence of functions $f_n, f:X \to Y$ defined on a compact $X$ and into an arbitrary metric space $Y$, point-wise and  uniform convergence are equivalent. 
\end{Corollary}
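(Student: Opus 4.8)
The plan is to obtain the statement as a composition of the two equivalence corollaries already established, once we check that the three hypotheses \nWLPO, \BDN, and \FANP supply everything those corollaries ask for. One implication is immediate, since uniform convergence trivially implies point-wise convergence; so assume $f_n \to f$ point-wise and aim to upgrade this to uniform convergence.

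First I would record two routine reductions. Since \LPO implies \WLPO, the hypothesis \nWLPO gives \nLPO. Since a compact metric space is, by our definition, complete and totally bounded, and total boundedness implies separability, $X$ is in particular complete and separable. Finally, by \cite[Theorem 1]{Ishihara1992} the existence of a discontinuous function is equivalent to \WLPO, so under \nWLPO every function --- in particular each $f_n$ and the limit $f$ --- is non-discontinuous.

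Now apply the corollary stating that, under \nWLPO and \BDN, point-wise and semi-uniform convergence coincide for functions on a separable space: this gives $f_n \to f$ semi-uniformly. Then apply the corollary stating that \FANP makes semi-uniform convergence and uniform convergence coincide for non-discontinuous functions on a compact space: since $X$ is compact and the $f_n$ and $f$ are non-discontinuous, this yields $f_n \to f$ uniformly, as required.

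There is no real obstacle here --- it is bookkeeping --- but the point worth a sentence is that each of the two omniscience/topological hypotheses does double duty: \nWLPO both feeds the Third-Trick step (in the guise of \nLPO, needed to pass from point-wise to sequentially semi-uniform convergence) and guarantees non-discontinuity of the $f_n$ and $f$ needed for the fan-theorem step, while compactness both supplies the separability required by the first corollary and is itself the hypothesis of the second. Once these matchings are spelled out, no further argument is needed.
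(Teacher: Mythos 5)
Your proof is correct and follows exactly the route the paper intends: the paper's own justification for this corollary is the one-line observation that compactness gives completeness, total boundedness, and hence separability, after which the two preceding corollaries (\nWLPO{} $+$ \BDN{} for point-wise $\Rightarrow$ semi-uniform on separable spaces, and \FANP{} for semi-uniform $\Rightarrow$ uniform on compact spaces, with non-discontinuity supplied by \nWLPO{} via \cite[Theorem 1]{Ishihara1992}) are chained together. Your additional bookkeeping about \nWLPO{} implying \nLPO{} and doing double duty is accurate and consistent with the paper's argument.
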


This  generalises the result of~\cite[\S 1 and \S5]{swart76a}, which there is proven with the help of continuous choice and the full fan theorem.

\section{Conclusion --- from the Second to the Third Trick?}

We should concede that, while our results are related to Ishihara's First and Second Trick, naming it the ``Third Trick'' might be slightly misleading. As mentioned above, the Second Trick is an iteration of the First Trick, so one might expect that the Third Trick is an iteration ---~or at least follows~--- from the Second one. This is not the case. One is tempted, in the situation that $f_n \to f$, where $f_n,f : \BS \to \NN$, to consider 
\[ F(\alpha) = \sum_{n \geqslant 0} \abs*{f_n(\alpha) - f(\alpha)} \ . \]
This is a well-defined function, since the sum is finite, because of the point-wise convergence. If we could apply Ishihara's Second Trick to $F$ we would immediately get (our main) Theorem \ref{Thm:3rdTrickNinf}. However, Ishihara's Second Trick requires strong extensionality, and that is not guaranteed with $F$. In~\cite{hD12b} the first author has given a version of Ishihara's second trick, which does not rely on strong extensionality, but which in turn has weaker consequences, which are not strong enough to deduce Theorem \ref{Thm:3rdTrickNinf}.

\section*{Acknowledgements}
The authors would like to thank the anonymous referee for pointing out the unnecessary assumption of completeness in Proposition \ref{Pro:12} and Corollary \ref{Cor:13} together with, ironically, the lack thereof in Corollary \ref{Cor:14}.

\bibliographystyle{alphaurl}
\bibliography{All}

\begin{thebibliography}{10}

\bibitem{pA01}
P.~Aczel and M.~Rathjen.
\newblock Notes on constructive set theory.
\newblock Technical Report~40, Institut Mittag-Leffler, The Royal Swedish
  Academy of Sciences, 2001.

\bibitem{eB67}
E.~Bishop.
\newblock {\em Foundations of Constructive Analysis}.
\newblock McGraw-Hill, 1967.

\bibitem{dB85}
E.~Bishop and D.~S. Bridges.
\newblock {\em Constructive Analysis}.
\newblock Springer-Verlag, 1985.

\bibitem{dB07}
D.~S. Bridges and H.~Diener.
\newblock The pseudocompactness of $[0,1]$ is equivalent to the uniform
  continuity theorem.
\newblock {\em Journal of Symbolic Logic}, 72(4):1379--1384, 2007.

\bibitem{swart76a}
H.~de~Swart.
\newblock Elements of intuitionistic analysis. rolle's theorem and complete,
  totally bounded, metric spaces.
\newblock {\em Mathematical Logic Quarterly}, 22(1):289--298, 1976.

\bibitem{hD12b}
H.~Diener.
\newblock Variations on a theme by {I}shihara.
\newblock {\em Mathematical Structures in Computer Science}, 25:1569--1577, 10
  2015.

\bibitem{hD17}
H.~Diener and M.~Hendtlass.
\newblock Complete: When enough is enough.
\newblock {\em Documenta Mathematica}, 24:899--914, 2019.

\bibitem{jD60}
J.~Dieudonn{\'{e}}.
\newblock {\em Foundations of Modern Analysis}.
\newblock Academic Press, 1960.

\bibitem{mE13b}
M.~Escard{\'o}.
\newblock Infinite sets that satisfy the principle of omniscience in any
  variety of constructive mathematics.
\newblock {\em Journal of Symbolic Logic}, 78(3):764--784, 09 2013.

\bibitem{mE13}
M.~Escard{\'o}.
\newblock Constructive decidability of classical continuity.
\newblock {\em Mathematical Structures in Computer Science}, 25(07):1578--1589,
  2015.

\bibitem{hI91}
H.~Ishihara.
\newblock Continuity and nondiscontinuity in constructive mathematics.
\newblock {\em Journal of Symbolic Logic}, 56(4):1349--1354, 1991.

\bibitem{Ishihara1992}
H.~Ishihara.
\newblock Continuity properties in constructive mathematics.
\newblock {\em Journal of Symbolic Logic}, 57(2):557--565, 1992.

\bibitem{hI02}
H.~Ishihara and S.~Yoshida.
\newblock A constructive look at the completeness of the space $\mathcal{D}
  (\textbf{R})$.
\newblock {\em Journal of Symbolic Logic}, 67(4):1511---1519, 2002.

\bibitem{pL04}
P.~Lietz.
\newblock {\em From Constructive Mathematics to Computable Analysis via the
  Realizability Interpretation}.
\newblock PhD thesis, TU Darmstadt, 2004.

\bibitem{rL12}
R.~Lubarsky.
\newblock On the failure of {B}{D}-{N} and {B}{D}, and an application to the
  anti-{S}pecker property.
\newblock {\em Journal of Symbolic Logic}, 78(1):39--56, 2013.

\bibitem{Troelstra1988}
A.~S. Troelstra and D.~van Dalen.
\newblock {\em Constructivism in mathematics. {V}ol. {II}}, volume 123 of {\em
  Studies in Logic and the Foundations of Mathematics}.
\newblock North-Holland Publishing Co., Amsterdam, 1988.

\end{thebibliography}

\section*{Appendix: Proof of Theorem \ref{Thm:3rdTrickMS}}
\begin{proof}
The proof is analogous to Lemma \ref{Lem:trick2.5}, Theorem \ref{Thm:3rdTrickNinf}, and Proposition \ref{Pro:3rdTrickLPO}, which correspond to Part (b), (c), and (d) respectively. The only real added effort is in Part (b), where the loss of decidability is compensated by using approximate decisions for  $\frac{\varepsilon}{4},\frac{\varepsilon}{2}$, and $\varepsilon$.
\begin{enumerate}[(a)]
  \item  Let $x_n \to x$, and $\varepsilon > 0$. First, we may assume, without loss of generality, that $\fa{n \in \NN}{\rho(f_n(x),f(x)) < \frac{\varepsilon}{2}} $, since we can otherwise consider an appropriate tail of the sequence $(f_n)_{n \geqslant 1}$. Secondly, we fix a modulus of convergence $\mu \in \BS$ such that $ \rho(x_n ,x) < \frac{1}{k}$ for all $n \geqslant \mu(k)$. 
\item
Our first claim is that for any $n \in \NN$ we can decide whether there is $z_n$ and $i \in \NN$ such that $\rho(z_n,x) \leqslant \frac{1}{n}$ and \[ \rho(f_i(z_n), f(z_n)) > \frac{\varepsilon}{4} \] or whether $\rho(f_i(x_m), f(x_m)) < \varepsilon$ for all $i \geqslant n$ and $m \geqslant \mu(n)$. 

First, notice that for fixed $m$ there exists $N_m$ such that $\rho(f_i(x_m),f(x_m)) < \varepsilon$ for all $i \geqslant N_m$.  Thus we can decide whether there exists $i \geqslant n$ such that $\rho(f_i(x_m),f(x_m)) > \frac{\varepsilon}{2}$ or whether $\rho(f_i(x_m),f(x_m)) < \varepsilon$ for all $i \geqslant n$. Choose a binary sequence $\gamma_m$ that flags these possibilities by $1$ and $0$ respectively. 

Now define a sequence $(w_m)_{m \geqslant \mu(n)}$ such that $w_m = x$, if $\gamma_m = 0$, and if $\gamma_m = 1$ set $w_m=x_{m^\prime}$ where $ m^\prime$ is the smallest index such that  $\gamma_m =1 $. In words, as long as $\gamma_m = 0$ we stay constant on $x$ and if we ever hit a term such that $\gamma_m = 1$, we switch to $x_m$ and stay constant from then on. The sequence $w_m$ is easily seen to be a Cauchy sequence, and therefore converges to a limit $w$. Now, choose $N_w$ such that   $\rho(f_i(w),f(w)) < \frac{\varepsilon}{2}$ for all $i \geqslant N_w$. Since we only need to check for the indices between $n$ and $N_w$ (if any), we can decide whether either $\rho(f_i(w),f(w)) < \frac{\varepsilon}{2}$ for all $i \geqslant n$ or whether there is $j \geqslant n$ such that $ \rho(f_i(w),f(w)) > \frac{\varepsilon}{4}$. In the latter case we are done, since $ \rho(w,x) \leqslant \frac{1}{n} $, so we can choose $z_n = w$. In the first case we must have $\gamma_m = 0$ for all $m \geqslant \mu(n)$: for assume there is $\gamma_m =1$. Then we can find the first such index $m^\prime$, which means that $w = x_{m^\prime}$ and there exists $i \geqslant n$ such that $\rho(f_i(x_{m^\prime}),f(x_{m^\prime})) > \frac{\varepsilon}{2}$. This is a contradiction to $\rho(f_i(w),f(w)) < \frac{\varepsilon}{2}$ for all $i \geqslant n$.

\item 

Using the previous part, fix a binary sequence \( (\lambda_n)_{n \geqslant 1} \)  such that
\begin{align*}
\lambda_n = 0 & \implies \ex{i \geqslant n, z_n \in X }{\rho(x,z_n) \leqslant \frac{1}{n} \land \rho(f_i(z_n), f(z_n)) > \frac{\varepsilon}{4}}  \\
\lambda_n = 1 & \implies  \fa{i \geqslant n, m \geqslant \mu(n)}{\rho(f_i(x_m), f(x_m)) < \varepsilon} \ .
\end{align*}
We may assume that the sequence \(\lambda_n\) is increasing, and that \(\lambda_1 = 0\), since we are otherwise done.

Now define a sequence \((y_n)_{n \geqslant 1}\) in \(X\) by 
\begin{align*}
	\lambda_n = 0 &\implies y_n = z_n  \ , \\
	\lambda_n = 1 &\implies y_n = z_m, \text{ where } \lambda_m =0 \land \lambda_{m+1} =1 \ .
\end{align*}
In words, as long as \(\lambda_n =0\) we set \(y_n = z_n \), and as soon as we find the first term such that \( \lambda_n =1\), we stay constant on the previous term \( z_{n-1} \). Again, \((y_n)_{n \geqslant 1}\) is easily seen to be a Cauchy sequence, which therefore converges to a limit \(y\) in \(X\). 

Since $f_n(y) \to f(y)$ there is $N_y$ such that
\begin{equation} \label{Eqn:conve}
\rho(f_n(y),f(y)) < \frac{\varepsilon}{4} \text{ for all } n \geqslant N_y  \ . 
 \end{equation}

 Now either $\lambda_{N_y} = 1$ or $\lambda_{N_y} = 0$.

In the second case there cannot be $n > N_y$ with $\lambda_n = 1$: In that case we could find $N_y \leqslant n^\prime < n$ such that $\lambda_{n^\prime} = 0$ and $\lambda_{n^\prime+1} = 1$. But then $y = z_{n^\prime}$ and there is $i \geqslant n^\prime$ such that $\rho(f_i(y),f(y)) > \frac{\varepsilon}{4}$, which is a contradiction to Equation \ref{Eqn:conve}, since $i \geqslant n^\prime \geqslant N_y$. Hence $\lambda_n = 0$ for all $n > N_y$, which means that $\lambda_n = 0$ for all $n$, since $(\lambda_n)_{n \geqslant 1}$ is increasing.

Together we can  decide whether $\fa{n \in \NN}{\lambda_n =0}$ or whether $\ex{n \in \NN}{\lambda_n =1}$  which means---by the definition of $(\lambda_n)_{n \geqslant 1}$---we have proven the main claim of the theorem. 
\item To prove the ``\LPO claim'' of this theorem let \((a_n)_{n \geqslant 1}\) be a binary sequence. Without loss of generality we may assume that \((a_n)_{n \geqslant 1}\) is increasing. Now consider the sequence \((v_n)_{n \geqslant 1}\) in \( X \) defined by
 \[ 
   v_{n} = \begin{cases}  
               z_m & \text{if } a_{n} = 1 \text{ and } a_{m}=1- a_{m+1}  \\ 
               x & \text{if } a_{n} = 0  \ .
                                                    \end{cases}
 \]
Again, this is a Cauchy sequence, which therefore converges to a limit \(v\). 
 Since \(f_n(v) \to 0\) we can find \(N_v\) such that 
\begin{equation} \label{Eqn:conv4}
\fa{n \geqslant N_v}{\rho(f_n(v), f(v) ) < \frac{\varepsilon}{4}} \ .
\end{equation}
Now either \( a_{N_v} = 1\) and we are done, or \( a_{N_v} = 0\). In this second case there cannot be \(n > N_v \) such that \(a_n = 1\): for assume there is such \(n\). Then we can find  \(N_v \leqslant m < n\) such that \(a_m = 1 - a_{m+1} \), which means that \(v = z_m\). Also, by assumption there exists \(j \geqslant m\) such that \(\rho(f_j(z_m), f(z_m)) > \frac{\varepsilon}{4} \), but that is a contradiction to Equation~\ref{Eqn:conv4}. Thus, if \(a_{N_v} = 0\), then \( \fa{n \in \NN}{a_n = 0}\). Altogether \LPO holds.
 \qedhere
\end{enumerate}

\end{proof}

\end{document}